\newtheorem{theorem}{Theorem}[section]
\newtheorem{conjecture}{Conjecture}
\newtheorem{corollary}[theorem] {Corollary}
\newtheorem{definition}[theorem]{Definition}
\newtheorem{lemma} [theorem]{Lemma}
\begin{document}
	\label{'ubf'}  
	\setcounter{page}{1}
	\markboth
	{\hspace*{-9mm} \centerline{\footnotesize $q$-analogue of the generalized Stieltjes constants}}
	{\centerline{\footnotesize Tapas Chatterjee and Sonam Garg } \hspace*{-9mm}}
	\vspace*{-2cm}
	
\begin{center}
	{{\textbf{Linear independence of $q$-analogue of the generalized Stieltjes constants over number fields }}\\
	\vspace{.2cm}
	\medskip
	{\sc Tapas Chatterjee \footnote{Research of the first author is partly supported by the core research grant CRG/2023/000804 of the Science and Engineering Research Board of DST, Government of India.}}\\
	{\footnotesize  Department of Mathematics,}\\
	{\footnotesize Indian Institute of Technology Ropar, Punjab, India.}\\
	{\footnotesize e-mail: {\it tapasc@iitrpr.ac.in}}\\
	\medskip
	{\sc Sonam Garg\footnote{Research of the second author is supported by the University Grants Commission (UGC), India under File No.: 972/(CSIR-UGC NET JUNE 2018).}}\\
	{\footnotesize Department of Mathematics, }\\
	{\footnotesize Indian Institute of Technology Ropar, Punjab, India.}\\
	{\footnotesize e-mail: {\it 2018maz0009@iitrpr.ac.in}}
	\medskip}
\end{center}
	
\thispagestyle{empty} 
\vspace{-.4cm}
\hrulefill
	
\begin{abstract}  
	{\footnotesize }
	In this article, we aim to extend the research conducted by Chatterjee and Garg in 2024, particularly focusing on the $q$-analogue of the generalized Stieltjes constants. These constants constitute the coefficients in the Laurent series expansion of a $q$-analogue of the Hurwitz zeta function around $s=1$. Chatterjee and Garg previously established arithmetic results related to $\gamma_0(q,x)$, for $q>1$ and $0 < x <1$ over the field of rational numbers. Here, we broaden their findings to encompass number fields $\mathbb{F}$ in two scenarios: firstly, when $\mathbb{F}$ is linearly disjoint from the cyclotomic field $\mathbb{Q}(\zeta_b)$, and secondly, when $\mathbb{F}$ has non-trivial intersection with $\mathbb{Q}(\zeta_b)$, with $b \geq 3$ being any positive integer.
\end{abstract}
\hrulefill
		
{\small \textbf{Key words and phrases}: }Baker's theory, Bernoulli polynomials, Cyclotomic fields, Number fields, Okada's criterion, $q$-Hurwitz zeta function. \\
		
{\bf{Mathematics Subject Classification 2020:}} 05A30, 11J81, 11J72, 11M35, 11B68.
	
\vspace{-.37cm}
	
\section{\bf Introduction}
	
	The Hurwitz zeta function was introduced by Adolf Hurwitz in 1882, as one of the generalizations of the Riemann zeta function. It is defined as follows:
\begin{align*}
	\zeta(s,a) = \sum_{n=0}^{\infty}\frac{1}{(n+a)^s},
\end{align*}
	where $s$ is a complex number with real part greater than $1$, and $0 < a \leq 1$. Much like the Riemann zeta function, it can be extended analytically to the entire complex plane, except at $s=1$, where it has a simple pole with a residue of one. This characteristic is evident in its Laurent series expansion:
	$$\zeta(s,a) = \frac{1}{s-1} + \displaystyle \sum_{n=0}^{\infty}\frac{(-1)^n}{n!}\gamma_n(a)(s-1)^n,$$ 
	where $\gamma_n(a)$ are the generalized Stieltjes constants. It plays a crucial role in studying prime number distribution and modular forms. Noteworthy properties of the function include functional equations, integral representations, and connections to special functions like the gamma function and polylogarithms. Since its inception, mathematicians have extensively explored the Hurwitz zeta function, establishing its significance in various mathematical disciplines. 
	
	In a different direction, there has been a growing interest among mathematicians in exploring alternative generalizations of classical functions known as $q$-analogues. A $q$-analogue represents a modified version of a function or mathematical structure, typically involving a parameter $q$. As $q$ approaches $1$, the $q$-analogue converges to the original function or structure. These $q$-analogues are not unique, with several variations explored for each classical function. A notable classical function in this exploration is the Riemann zeta function. In 2003, Kurokawa and Wakayama \cite{KW} directed their focus towards a specific $q$-analogue of the Riemann zeta function, denoted as $\zeta_q(s)$:
\begin{align*} 
	\zeta_q(s) = \sum_{n=1}^\infty\frac{q^n}{[n]_q^s}, ~~~~~~~~~ \Re(s) > 1.
\end{align*}

	Further, Chatterjee and Garg as outlined in ref.\,\cite{TSG, TSG1, TSG2} investigated this variant, its generalization, and its algebraic identities. In particular, they indulge in the coefficients in the Laurent series expansion of these variants. Also, they explored the transcendence and linear independence results associated with these coefficients. In addition to the above, Kurokawa and Wakayama addressed a $q$-analogue of the Hurwitz zeta function in ref.\,\cite{KW}. Specifically, they investigated the following $q$-analogue of the Hurwitz zeta function, denoted as $\zeta_q(s,x)$, for the case when $q>1$:
\begin{align} \label{E2}
	\zeta_q(s,x) = \sum_{n=0}^\infty\frac{q^{n+x}}{[n+x]_q^s}, ~~~~ \Re(s)>1,
\end{align}
	where $x \notin \mathbb{Z}_{\leq 0}$. Subsequently, Chatterjee and Garg in ref.\,\cite{TSG3}, further examined this $q$-variant of the Hurwitz zeta function and gave the closed-form expression for the first two coefficients in its Laurent series expansion around $s=1$. In this context, they gave the following theorem:
	
\begin{theorem}\label{T1}
	The $q$-analogue of the Hurwitz zeta function defined in Equation (\ref{E2}) is meromorphic for $s\in \mathbb{C}$ and its Laurent series expansion around $s=1$ is given by:
\begin{align*}
	\zeta_q(s,x) = \frac{q-1}{\log q}.\frac{1}{s-1} + \gamma_0(q,x) + \gamma_1(q,x)(s-1) + \gamma_2(q,x)(s-1)^2 + \gamma_3(q,x)(s-1)^3 + \cdots
\end{align*}
	with
\begin{align*}
	\gamma_0(q,x)& = \sum_{n=1}^\infty\frac{q^{n(1-x)}}{[n]_q} + \frac{(q-1)\log(q-1)}{\log q} - \frac{q-1}{2} + (q-1)(1-x)
\end{align*}
	and
\begin{align*}
	\gamma_1(q,x)& = \Bigg(\sum_{n=1}^\infty\frac{q^{n(1-x)}}{[n]_q} + \frac{(q-1)\log(q-1)}{2\log q} - \frac{q-1}{2} + (q-1)(1-x)\Bigg)\log(q-1)\\
	&\quad + \Bigg(\frac{q-1}{12} - \sum_{n=1}^\infty \frac{(1 + (q^n-1)x)q^{n(1-x)}}{[n]_q(q^n-1)} - \frac{(q-1)(1-x)x}{2}\Bigg)\log q\\
	&\quad\quad + \sum_{n=1}^\infty\frac{q^{n(1-x)}s(n+1,2)}{n![n]_q},
\end{align*}
	where $s(n+1, i)$ are the unsigned Stirling numbers of the first kind.
\end{theorem}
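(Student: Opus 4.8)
\section*{Proof proposal}

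The plan is to derive a single-series meromorphic representation of $\zeta_q(s,x)$ in which every summand is an explicit meromorphic function of $s$, and then read off the Laurent coefficients around $s=1$ by hand. First I would substitute $[n+x]_q = (q^{n+x}-1)/(q-1)$ into \eqref{E2} to get $\zeta_q(s,x) = (q-1)^s\sum_{n\ge 0} q^{n+x}(q^{n+x}-1)^{-s}$, factor $(q^{n+x}-1)^{-s} = q^{-(n+x)s}(1-q^{-(n+x)})^{-s}$, and expand the last factor by the binomial series $(1-z)^{-s} = \sum_{k\ge 0}\binom{s+k-1}{k}z^k$, which converges since $q>1$ forces $z = q^{-(n+x)}<1$. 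After justifying the interchange of the two summations on the region $\Re(s)>1$ (absolute convergence, dominated by a convergent double geometric-type series) and carrying out the resulting geometric sum over $n$, I obtain
\[
\zeta_q(s,x) = (q-1)^s\sum_{k=0}^\infty \binom{s+k-1}{k}\frac{q^{x(1-s-k)}}{1-q^{1-s-k}}.
\]
Because each term is meromorphic on $\C$ and the series converges locally uniformly away from the discrete pole set $\{s : q^{1-s-k}=1\}$, this representation furnishes the meromorphic continuation, establishing the first assertion.

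For the Laurent expansion I would split off the $k=0$ term $T_0(s) = (q-1)^s q^{x(1-s)}/(1-q^{1-s})$, which alone is singular at $s=1$, leaving $R(s) = \zeta_q(s,x)-T_0(s)$ analytic in a neighbourhood of $s=1$ (for $k\ge 1$ the factor $1/(1-q^{1-s-k})$ is regular there). Writing $s = 1+\varepsilon$ and setting $L=\log q$, I would expand $T_0$ using $(q-1)^{1+\varepsilon} = (q-1)e^{\varepsilon\log(q-1)}$, $q^{-x\varepsilon} = e^{-x\varepsilon L}$, and the key Bernoulli expansion $L\varepsilon/(1-e^{-L\varepsilon}) = 1 + \tfrac{L\varepsilon}{2} + \tfrac{L^2\varepsilon^2}{12}+\cdots$. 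Multiplying these three series gives $T_0(s) = \frac{q-1}{L}\big[\varepsilon^{-1} + c_0 + c_1\varepsilon + \cdots\big]$ with explicit $c_0,c_1$; the residue $\frac{q-1}{L}=\frac{q-1}{\log q}$ confirms the principal part, and $\frac{q-1}{L}c_0$ and $\frac{q-1}{L}c_1$ supply the non-summation pieces of $\gamma_0$ and $\gamma_1$.

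The analytic part $R$ is handled termwise: evaluating at $s=1$ and using $[k]_q(q-1)=q^k-1$ collapses each $k\ge 1$ summand to $q^{k(1-x)}/[k]_q$, producing the series $\sum_{n\ge 1}q^{n(1-x)}/[n]_q$ in $\gamma_0$. For $\gamma_1$ I would differentiate $R$ termwise (justified by the same locally uniform convergence) and evaluate the logarithmic derivative of each summand at $s=1$, which equals $\log(q-1) + H_k - x\log q - \log q/(q^k-1)$, where the harmonic number $H_k$ arises from $\frac{d}{ds}\binom{s+k-1}{k}\big|_{s=1}$. Invoking the classical identity $s(n+1,2)=n!\,H_n$ then rewrites the harmonic contribution as the stated Stirling series $\sum q^{n(1-x)}s(n+1,2)/(n![n]_q)$. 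The final step is purely algebraic: collecting $\frac{q-1}{L}c_0 + R(1)$ and $\frac{q-1}{L}c_1 + R'(1)$ and regrouping the terms according to their $\log(q-1)$ and $\log q$ factors reproduces the displayed formulas. I expect the main obstacle to be twofold: rigorously justifying the interchange of summation and the termwise differentiation underlying both the continuation and the coefficient extraction, and the error-prone bookkeeping of the order-$\varepsilon$ coefficient $c_1$, where the cross terms of three separate expansions must be combined correctly to match the $\log q$-bracket in $\gamma_1$.
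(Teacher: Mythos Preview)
The paper does not actually prove Theorem~\ref{T1}; it is quoted there as a result established by Chatterjee and Garg in the separate manuscript~\cite{TSG3}, so there is no in-paper argument to compare against. That said, your proposal is the standard and correct route for results of this type: the binomial-series expansion of $(1-q^{-(n+x)})^{-s}$ followed by the geometric sum over $n$ yields exactly the single-series representation you wrote, and this is the same continuation scheme used by Kurokawa and Wakayama in~\cite{KW} for the $x=1$ case. Splitting off the $k=0$ term and expanding it via the Bernoulli generating function $u/(1-e^{-u})$ indeed produces the residue $(q-1)/\log q$ and the non-summation constants in $\gamma_0$ and $\gamma_1$; I checked your $T_0$ expansion against the stated $\gamma_0$ and it matches term for term. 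Your identification of $R(1)$ with $\sum_{k\ge 1} q^{k(1-x)}/[k]_q$ and the use of $s(n+1,2)=n!\,H_n$ to capture the derivative of the binomial coefficient are also correct. The two caveats you flag---uniform convergence for the termwise manipulations and the bookkeeping of the $O(\varepsilon)$ cross terms---are real but routine, and the approach would go through without any missing ideas.
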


	Further, they addressed the results concerning the special values of $\gamma_0(q,x)$ for $q>1$ and $ 0< x <1$. This is detailed in the following theorem:
	
\begin{theorem} \label{T2}
	Let $q>1$ be any positive algebraic number, $b \geq 3$ be any integer, and $1 \leq a < b/2$ with $(a,b)=1$. Then,
\begin{align*}
	\gamma_0 \left(q, \frac{a}{b} \right) - \gamma_0 \left(q,1- \frac{a}{b} \right) = \left( \frac{q-1}{\log q} \right) \pi\cot \left( \frac{\pi a}{b} \right)+ (2q-3) \left(\frac{1}{2} - \frac{a}{b} \right)
\end{align*}
	is a transcendental number.
\end{theorem}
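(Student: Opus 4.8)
The plan is to split the statement into two essentially independent tasks: an \emph{exact} evaluation of the difference of the two Stieltjes constants in closed form, followed by a transcendence argument for that closed form. Writing $x=a/b$ and using the expression for $\gamma_0(q,x)$ supplied by Theorem~\ref{T1}, the two $x$-independent terms $\frac{(q-1)\log(q-1)}{\log q}$ and $-\frac{q-1}{2}$ cancel in the difference $\gamma_0(q,a/b)-\gamma_0(q,1-a/b)$, while the linear terms $(q-1)(1-x)$ contribute $(q-1)(1-2a/b)$. Thus everything reduces to evaluating the series
\[
S:=\sum_{n=1}^{\infty}\frac{q^{n(1-a/b)}-q^{n(a/b)}}{[n]_q}=(q-1)\sum_{n=1}^{\infty}\frac{q^{n(1-a/b)}-q^{n(a/b)}}{q^{n}-1},
\]
which is where the cotangent has to originate.

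To evaluate $S$ I would expand $\frac{1}{q^{n}-1}=\sum_{m\ge 1}q^{-mn}$ (legitimate since $q>1$), interchange the two summations, and sum the resulting geometric series in $n$; this rewrites $S/(q-1)$ as a difference $F(a/b)-F(1-a/b)$ of the auxiliary function $F(y)=\sum_{j\ge 0}(q^{j+y}-1)^{-1}$. Applying the Mittag--Leffler (partial-fraction) expansion of $(e^{u}-1)^{-1}$ term by term, with $u=(\log q)(j+y)$, the constant $-\tfrac12$ cancels in the difference, and the polar part produces $\frac{1}{\log q}\sum_{j\ge 0}\bigl(\frac{1}{j+y}-\frac{1}{j+1-y}\bigr)$. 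This last sum equals $\psi(1-y)-\psi(y)=\pi\cot(\pi y)$ by the reflection formula for the digamma function, which delivers the term $\frac{q-1}{\log q}\,\pi\cot(\pi a/b)$; combining it with the elementary linear terms from the first paragraph gives the claimed closed form. I expect this series evaluation to be the main obstacle: one must justify the interchange of summation orders, control convergence of the doubly-indexed sum, and correctly isolate the cotangent contribution from the remaining pieces.

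With the closed form established, transcendence follows from Baker's theory. Write the right-hand side as $c_{1}\cdot\frac{\pi}{\log q}+c_{0}$, where $c_{1}=(q-1)\cot(\pi a/b)$ and $c_{0}=(2q-3)(\tfrac12-a/b)$. Since $a/b$ is rational, $\cot(\pi a/b)$ is algebraic; because $0<a/b<1/2$ we have $0<\pi a/b<\pi/2$, so $\cot(\pi a/b)\ne 0$, and as $q$ is algebraic with $q>1$ both $c_{0}$ and $c_{1}$ are algebraic with $c_{1}\ne 0$. It therefore suffices to prove that $\pi/\log q$ is transcendental, which I would deduce from Baker's theorem applied to the logarithms $\log(-1)=i\pi$ and $\log q$ of the algebraic numbers $-1$ and $q$: these are linearly independent over $\Q$ (one is purely imaginary and nonzero, the other real and nonzero), hence linearly independent over $\overline{\Q}$. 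If $\pi/\log q=\beta$ were algebraic, then $(-i)(i\pi)-\beta(\log q)=0$ would be a nontrivial $\overline{\Q}$-linear relation among $i\pi$ and $\log q$, a contradiction.

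Finally, $c_{1}\cdot\frac{\pi}{\log q}$ is the product of a nonzero algebraic number and a transcendental number, hence transcendental, and adding the algebraic number $c_{0}$ leaves it transcendental; this is exactly the assertion of the theorem. I note that this last step is robust: it uses only that the coefficient of $\pi/\log q$ is a nonzero algebraic number, so the precise algebraic constant $(2q-3)(\tfrac12-a/b)$ is immaterial to the conclusion, and the entire difficulty is concentrated in the exact series evaluation of the second paragraph.
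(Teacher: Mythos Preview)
The paper does not itself prove Theorem~\ref{T2}: both the closed-form identity and the transcendence assertion are quoted from the companion paper~\cite{TSG3}, so there is no in-paper argument to compare against. Your transcendence step (paragraphs three and four) is correct and is exactly the content of Lemma~\ref{L3}, which the present paper likewise imports from~\cite{TSG3}; nothing needs to change there.

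The series evaluation, however, has a real gap. Your reduction $S/(q-1)=F(a/b)-F(1-a/b)$ with $F(y)=\sum_{j\ge0}(q^{j+y}-1)^{-1}$ is correct, and feeding in the Mittag--Leffler expansion $(e^{u}-1)^{-1}=u^{-1}-\tfrac12+2\sum_{k\ge1}u/(u^{2}+4\pi^{2}k^{2})$ is a natural idea. But you retain only the $u^{-1}$ and $-\tfrac12$ pieces and tacitly discard the tail $2\sum_{k\ge1}u/(u^{2}+4\pi^{2}k^{2})$; that tail does \emph{not} cancel in $F(y)-F(1-y)$. A bookkeeping check exposes this: from Theorem~\ref{T1} the ``elementary linear terms'' you cite contribute $(q-1)(1-2a/b)=2(q-1)\bigl(\tfrac12-\tfrac{a}{b}\bigr)$, whereas the target coefficient in Theorem~\ref{T2} is $2q-3$, not $2(q-1)$. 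The missing $-(\tfrac12-\tfrac{a}{b})$ has to come from $S$, i.e.\ the correct evaluation is
\[
S=\frac{q-1}{\log q}\,\pi\cot\!\Bigl(\frac{\pi a}{b}\Bigr)+\Bigl(\frac{a}{b}-\frac12\Bigr),
\qquad\text{equivalently}\qquad
F(y)-F(1-y)=\frac{\pi\cot(\pi y)}{\log q}+\frac{y-\tfrac12}{\,q-1\,},
\]
and the second summand is precisely the contribution of the tail you dropped. Thus your outline recovers the transcendental main term but the wrong algebraic constant; since the transcendence conclusion depends only on the main term, the \emph{conclusion} survives, but the stated identity does not follow from your sketch as written. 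Repairing it requires an independent evaluation of that residual (equivalently a reflection-type identity for the $q$-digamma function), which is where the genuine work of~\cite{TSG3} presumably sits; your caveat about ``isolating the cotangent contribution from the remaining pieces'' is exactly on point, but the sketch as given assumes those remaining pieces are zero, and they are not.
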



	Additionally, they presented a conjecture regarding the linear independence of these special values of $\gamma_0(q,x)$, along with the value $1$, when evaluated at rational arguments subject to co-prime conditions. Their conjecture is formulated as follows:
\begin{conjecture}
	Let $q>1$ be any positive algebraic number and $b \geq 3$ be any integer. Then, the following $\varphi(b) +1$ real numbers:
	$$ \left\{1, \gamma_0\left(q, \frac{a}{b}\right): 1 \leq a < b, (a,b)=1 \right\}$$
	are linearly independent over the field of rationals.	
\end{conjecture}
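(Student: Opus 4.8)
The plan is to argue by contradiction, following the Baker--Okada paradigm that governs the classical linear independence results for generalized Stieltjes constants and cotangent values. Suppose a nontrivial rational relation exists; after clearing denominators we may take integers $c_0$ and $c_a$, not all zero, with
\[
c_0 + \sum_{\substack{1 \le a < b \\ (a,b)=1}} c_a\,\gamma_0\!\left(q,\tfrac{a}{b}\right) = 0 .
\]
Substituting the closed form from Theorem~\ref{T1} and writing $\tfrac{1}{[n]_q}=(q-1)\sum_{m\ge 1}q^{-nm}$, one expands the defining series and reorganizes it by residue classes modulo $b$: setting $\omega=q^{1/b}$,
\[
\sum_{n=1}^{\infty}\frac{q^{\,n(1-a/b)}}{[n]_q}
 = (q-1)\sum_{\substack{j\ge 1 \\ j\equiv a \,(\mathrm{mod}\,b)}}\frac{1}{\omega^{\,j}-1}=:S(a).
\]
This isolates three kinds of contributions to $\gamma_0(q,a/b)$: an algebraic part $-\tfrac{q-1}{2}+(q-1)(1-\tfrac{a}{b})$, a single logarithmic part $A=\tfrac{(q-1)\log(q-1)}{\log q}$ independent of $a$, and the series part $S(a)$, which is $(q-1)$ times a convergent sum of the \emph{algebraic} numbers $1/(\omega^{j}-1)$.

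Grouping these, the hypothetical relation takes the shape
\[
\beta \;+\; \Big(\sum_a c_a\Big)A \;+\; \sum_a c_a\,S(a) \;=\;0,
\]
where $\beta$ is algebraic. The first step is to force the coefficient of $A$ to vanish. Here I would invoke Baker's theorem: since $q$ is algebraic, $\log q$ and $\log(q-1)$ are logarithms of algebraic numbers, and unless $q$ and $q-1$ are multiplicatively dependent the quantity $A$ is transcendental and cannot be expressed as an algebraic combination of $1$ and the series values $S(a)$, which are of a different arithmetic nature. Making this separation rigorous is the first place a genuine transcendence input is required. Granting it, one concludes $\sum_a c_a=0$, and the relation collapses to $\beta+\sum_a c_a S(a)=0$ with $\beta$ algebraic and $\sum_a c_a=0$.

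The remaining task is to prove that a combination $\sum_a c_a S(a)$ of residue-class Lambert sums can be algebraic only when all $c_a$ vanish. This is the natural home for Okada's criterion. Using Dirichlet characters modulo $b$ (equivalently, the structure of the cyclotomic field $\mathbb{Q}(\zeta_b)$) to resolve the residue-class sums, and the link between such sums and Bernoulli polynomials via values of the $q$-Hurwitz-type function, I would rewrite $\sum_a c_a S(a)$ as a sum over characters weighted by the finite Fourier coefficients of $(c_a)$. The linear independence then reduces to a non-vanishing statement for the associated character sums, exactly as in the classical Okada criterion, where that non-vanishing is supplied by $L(1,\chi)\neq 0$. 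Feeding this back through Fourier inversion forces every $c_a=0$, whence $\beta=0$ and finally $c_0=0$, contradicting nontriviality.

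The \textbf{main obstacle} is the transcendence machinery underlying the two separation steps. Classically both rest on Baker's theorem (for the logarithmic terms) and on $L(1,\chi)\neq 0$ (for Okada's criterion). In the $q$-deformed setting the series $S(a)$ is an infinite sum of algebraic numbers $1/(\omega^{\,j}-1)$ rather than a combination of ordinary logarithms, so the decisive difficulty is to prove the analogous independence and non-vanishing for these $q$-Lambert values. I expect this to require a $q$-analogue of the theory of linear forms in logarithms, together with a careful verification that the Okada-type character matrix remains nonsingular after the $q$-deformation. Securing these two inputs --- the separation of $A$ from the $S(a)$, and the nonsingularity of the Okada-type system --- is precisely why the statement is only a conjecture and is the crux of any complete proof.
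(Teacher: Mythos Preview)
The statement you are trying to prove is \emph{Conjecture~A} in the paper, and the paper does not prove it.  What the paper does establish is the partial result Theorem~\ref{T4} (with $\mathbb{F}=\mathbb{Q}$): the dimension is at least $\varphi(b)/2+1$.  The paper's route to this partial result is quite different from yours.  It does not analyse the Lambert sums $S(a)$ at all; instead it passes to the antisymmetric combinations
\[
\gamma_0\!\left(q,\tfrac{a}{b}\right)-\gamma_0\!\left(q,1-\tfrac{a}{b}\right)
=\frac{(q-1)\pi}{\log q}\cot\!\left(\tfrac{\pi a}{b}\right)+(2q-3)\left(\tfrac12-\tfrac ab\right),
\]
(Theorem~\ref{T2}), so that the series contributions and the $\log(q-1)$ term cancel outright.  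Linear independence of these $\varphi(b)/2$ differences then follows from Okada's classical theorem on cotangent values together with the transcendence of $\pi/\log q$ (Lemma~\ref{L3}).  The symmetric combinations $\gamma_0(q,a/b)+\gamma_0(q,1-a/b)$, which still contain the $S(a)$, are left untouched---this is exactly why only half of the conjectured dimension is reached.

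Your proposal, by contrast, attacks the full conjecture head-on via the Lambert expansion, and you correctly isolate the two genuine obstacles: (i) separating $A=(q-1)\log(q-1)/\log q$ from the $S(a)$, and (ii) proving an Okada-type nonsingularity for the $S(a)$ themselves.  You also correctly acknowledge that neither step is currently available.  So your proposal is not a proof, and there is no proof in the paper to compare it with.

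One concrete gap worth flagging: your appeal to Baker's theorem to force $\sum_a c_a=0$ does not go through as stated.  Baker controls $\overline{\mathbb{Q}}$-linear relations among $1,\log\alpha_1,\dots,\log\alpha_n$, but the $S(a)$ are infinite sums of algebraic numbers $1/(\omega^{j}-1)$, not logarithms, and nothing in Baker's theory prevents $A$ from lying in the $\mathbb{Q}$-span of $1$ and the $S(a)$.  This is precisely the kind of statement that would require a $q$-deformed transcendence input beyond what is presently known, and is the reason the paper leaves the statement as a conjecture.
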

	
	In this article, we consider a number field extension of Conjecture A due to Chatterjee and Garg, specifically applied to a number fields, denoted as $\mathbb{F}$. In this regard, we give the following conjecture:
	
\begin{conjecture}
	Let $q>1$ be any positive algebraic number and $b \geq 3$ be any integer. Let $\mathbb{F}$ be a number field such that $\mathbb{F}$ is linearly disjoint from the cyclotomic field $\mathbb{Q}(\zeta_b)$. Then, the following $\varphi(b) +1$ real numbers:
	$$ \left\{1, \gamma_0\left(q, \frac{a}{b}\right): 1 \leq a < b, (a,b)=1 \right\}$$
	are linearly independent over $\mathbb{F}$.	
\end{conjecture}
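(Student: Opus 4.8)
The plan is to suppose, for contradiction, that there is a nontrivial $\mathbb{F}$-linear relation
$$c_0+\sum_{\substack{1\le a<b\\(a,b)=1}}c_a\,\gamma_0\!\left(q,\tfrac{a}{b}\right)=0,\qquad c_0,c_a\in\mathbb{F},$$
and to show that all coefficients vanish by resolving the distinct transcendental ``types'' the left-hand side produces. First I would insert the closed form of Theorem~\ref{T1}, writing $\gamma_0(q,a/b)=S(a/b)+C+(q-1)(1-a/b)$ with $S(a/b)=\sum_{n\ge1}q^{n(1-a/b)}/[n]_q$ and $C=\frac{(q-1)\log(q-1)}{\log q}-\frac{q-1}{2}$ independent of $a$. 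Collecting terms, the relation becomes $\sum_a c_a S(a/b)+\frac{(q-1)\log(q-1)}{\log q}\sum_a c_a+B=0$, where $B\in\overline{\mathbb{Q}}$ absorbs the algebraic pieces $(q-1)(1-a/b)$, $-(q-1)/2$ and $c_0$.

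The next step is to expose the structure of $\sum_a c_a S(a/b)$ through the reflection $a\mapsto b-a$, which fixes the index set. Splitting $c_a=\tfrac12(c_a+c_{b-a})+\tfrac12(c_a-c_{b-a})$, the antisymmetric piece is governed by the differences in Theorem~\ref{T2}, contributing a term proportional to $\frac{\pi}{\log q}\sum_a(c_a-c_{b-a})\cot(\pi a/b)$, while the symmetric piece, via a $q$-analogue of Gauss's digamma theorem, rewrites $\frac{\log q}{q-1}\big(S(a/b)+S(1-a/b)\big)$ as a sum of logarithms of algebraic numbers --- the $q$-deformed counterparts of the cyclotomic quantities $1-\zeta_b^{\,n}$. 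Multiplying the whole relation by $\log q/(q-1)$ then turns it into a single linear form in $\pi$, $\log q$, $\log(q-1)$ and these logarithms, all with algebraic coefficients (lying in $\mathbb{F}(q,\zeta_b)$).

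I would then apply Baker's theorem to this linear form in $\pi$ and logarithms of algebraic numbers. Provided the occurring logarithms are $\mathbb{Q}$-linearly independent, Baker forces every algebraic coefficient to vanish. The vanishing of the coefficient of $\pi$ reads $\sum_a(c_a-c_{b-a})\cot(\pi a/b)=0$; here Okada's criterion supplies the $\mathbb{Q}$-linear independence of the cotangent values $\cot(\pi a/b)$, which lie in the real cyclotomic field $\mathbb{Q}(\zeta_b)^+$. Because $\mathbb{F}$ is linearly disjoint from $\mathbb{Q}(\zeta_b)$, the map $\mathbb{F}\otimes_{\mathbb{Q}}\mathbb{Q}(\zeta_b)^+\to\mathbb{F}\,\mathbb{Q}(\zeta_b)^+$ is injective, so $\mathbb{Q}$-independent elements of $\mathbb{Q}(\zeta_b)^+$ remain $\mathbb{F}$-independent; this gives $c_a=c_{b-a}$ for all $a$. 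The vanishing of the remaining coefficients, together with $B=0$, then forces the symmetric combinations of the $c_a$ and finally $c_0$ to be zero, completing the contradiction.

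The heart of the argument --- and the main obstacle --- is the symmetric, logarithmic side. One must first establish the $q$-analogue of Gauss's digamma identity presenting $S(a/b)+S(1-a/b)$ as a $\overline{\mathbb{Q}}$-linear combination of logarithms of explicit algebraic numbers, and then, crucially, prove that these numbers together with $q$ and $q-1$ are \emph{multiplicatively independent}, so that the attendant logarithms (along with $\pi$) are $\mathbb{Q}$-linearly independent and Baker's theorem applies without degeneration. This multiplicative independence is the $q$-deformed analogue of the independence of cyclotomic units underlying the classical Stieltjes-constant results, and it is where the real work lies. By contrast, the linear-disjointness hypothesis is used only to promote Okada's rational-field conclusion for the cotangents to the field $\mathbb{F}$; the logarithmic part is already settled over $\mathbb{F}$, since Baker's theorem accommodates arbitrary algebraic coefficients.
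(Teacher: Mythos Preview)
The statement you are attempting to prove is presented in the paper as Conjecture~B, not as a theorem; the paper does \emph{not} prove it. What the paper does prove is the partial result Theorem~\ref{T4}: at least $\varphi(b)/2+1$ of the listed numbers are $\mathbb{F}$-linearly independent. The paper's argument is exactly the antisymmetric half of your outline --- it uses Theorem~\ref{T2} to write the differences $\gamma_0(q,a/b)-\gamma_0(q,1-a/b)$ as $(q-1)\pi\cot(\pi a/b)/\log q$ plus an algebraic term, invokes the transcendence of $\pi/\log q$ (Lemma~\ref{L3}) to separate the two pieces, and then applies the Murty--Saradha extension of Okada's criterion over number fields (Theorem~\ref{T5}), which already packages the linear-disjointness hypothesis, to conclude that the $\varphi(b)/2$ differences together with $1$ are $\mathbb{F}$-independent (Corollary~\ref{C1}). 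That is as far as the paper goes.

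Your proposal goes further by attacking the symmetric part, and this is where it ceases to be a proof. You assert that $S(a/b)+S(1-a/b)$ admits a $q$-analogue of Gauss's digamma formula expressing it as a $\overline{\mathbb{Q}}$-linear combination of logarithms of explicit algebraic numbers, and you assert that those algebraic numbers, together with $q$ and $q-1$, are multiplicatively independent so that Baker's theorem applies without collapse. Neither claim is established: you do not produce the identity, and you do not verify the independence. You yourself flag that ``it is where the real work lies'' --- and it is precisely this obstacle that leaves the statement conjectural in the paper. Until the symmetric sums $S(a/b)+S(1-a/b)$ are actually brought into a form amenable to Baker's theorem and the requisite multiplicative independence is proved, your outline is a plausible strategy, not a proof; as written it recovers only the $\varphi(b)/2+1$ lower bound that the paper already has.
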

	
	Now, we begin our study by defining the following $\mathbb{F}$-linear spaces:
\begin{definition}
	Let $q>1$ be any positive algebraic number and $b \geq 3$ be any integer. Then, $V_\mathbb{F}(q,b)$ is the $\mathbb{F}$-linear space which is defined as: 
	$$V_\mathbb{F}(q,b) = \mathbb{F}-span~ of \left\{1, \gamma_0\left(q, \frac{a}{b}\right): 1 \leq a < b, (a,b)=1 \right\}.$$
\end{definition}

	Then, Conjecture B has an equivalent form which can be stated as follows:
\begin{conjecture}
	Let $q>1$ be any positive algebraic number and $b \geq 3$ be any integer. Let $\mathbb{F}$ be a number field such that $\mathbb{F}$ is linearly disjoint from the cyclotomic field $\mathbb{Q}(\zeta_b)$. Then, 
	$$dim_{\mathbb{F}}V_{\mathbb{F}}(q,b) = \varphi(b) +1.$$
\end{conjecture}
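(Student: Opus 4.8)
The plan is to argue by contradiction, following the template developed for the linear independence of the classical Stieltjes constants at rational arguments, but exploiting the feature that in the $q$-setting the ``Euler-type'' constant is already a combination of logarithms of algebraic numbers, so that Baker's theory applies without the obstruction caused classically by $\gamma$. Suppose there were a nontrivial relation
$$
c_0+\sum_{\substack{1\le a<b\\(a,b)=1}}c_a\,\gamma_0\!\Big(q,\tfrac ab\Big)=0,\qquad c_0,c_a\in\mathbb F .
$$
Substituting the closed form of Theorem~\ref{T1}, one writes $\gamma_0(q,x)=S(x)+\frac{(q-1)\log(q-1)}{\log q}-\frac{q-1}{2}+(q-1)(1-x)$ with $S(x)=\sum_{n\ge1}q^{n(1-x)}/[n]_q$. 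Expanding $1/(q^n-1)=\sum_{k\ge1}q^{-nk}$ and summing the resulting geometric series gives the cleaner shape $S(x)=(q-1)\sum_{m\ge0}(q^{m+x}-1)^{-1}$, so that $S(a/b)$ is an affine function of the $q$-digamma value $\psi_q(a/b)$. Collecting the purely algebraic contributions into $c_0$, the hypothetical relation becomes a linear relation over $\mathbb F$ among $1$, $\log(q-1)/\log q$ and the numbers $\psi_q(a/b)$.

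The first key step is to split the index set under the involution $a\mapsto b-a$ into antisymmetric and symmetric parts. For the antisymmetric part Theorem~\ref{T2} supplies the exact evaluation
$$
\gamma_0\!\Big(q,\tfrac ab\Big)-\gamma_0\!\Big(q,1-\tfrac ab\Big)=\frac{q-1}{\log q}\,\pi\cot\!\Big(\frac{\pi a}{b}\Big)+\big(2q-3\big)\Big(\tfrac12-\tfrac ab\Big),
$$
so the odd part of the relation is, modulo algebraic numbers, the single multiple $\frac{q-1}{\log q}\sum_{a<b/2}\tfrac{c_a-c_{b-a}}2\,\pi\cot(\pi a/b)$. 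Since $\log(-1)$, $\log q$ and $\log(q-1)$ are $\mathbb Q$-linearly independent logarithms of algebraic numbers, Baker's theorem shows that $\pi$, $\log q$ and $\log(q-1)$ are linearly independent over $\overline{\mathbb Q}$. Granting that the symmetric $q$-series introduced below are themselves linearly independent from $\pi$ over $\overline{\mathbb Q}$—a point that is part of the main difficulty—the $\pi$-carrying term cannot be absorbed by the logarithmic contributions, and its algebraic coefficient must vanish, i.e.\ $\sum_{a<b/2}(c_a-c_{b-a})\cot(\pi a/b)=0$.

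At this point Okada's criterion, expressed through the Hurwitz formula and the relevant Bernoulli polynomials, characterizes the vanishing $\mathbb Q$-linear combinations of the cotangent values $\{\cot(\pi a/b)\}$ and yields that the only such relation is the trivial one for the coprime residues considered. The role of the hypothesis that $\mathbb F$ is linearly disjoint from $\mathbb Q(\zeta_b)$ is precisely to permit descent: the values $\cot(\pi a/b)$ lie in the real cyclotomic field $\mathbb Q(\zeta_b)^+$, and linear disjointness ($\mathbb F\cap\mathbb Q(\zeta_b)=\mathbb Q$) guarantees that an $\mathbb F$-linear relation among them forces a $\mathbb Q$-linear one, to which Okada applies. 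Consequently every antisymmetric coefficient vanishes, so $c_a=c_{b-a}$ for all admissible $a$, and the problem reduces to the symmetric combinations $\gamma_0(q,a/b)+\gamma_0(q,1-a/b)$ together with $1$ and $\log(q-1)/\log q$.

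I expect this final, symmetric, step to be the genuine obstacle, and it is the reason the assertion is stated only as a conjecture. After removing the common term $\frac{(q-1)\log(q-1)}{\log q}$, the even part reduces to the $q$-series $\sum_{m\ge0}\big[(q^{m+a/b}-1)^{-1}+(q^{m+1-a/b}-1)^{-1}\big]$, equivalently $\psi_q(a/b)+\psi_q(1-a/b)$. Unlike the classical case, where Gauss's digamma theorem collapses the even part into a finite combination of $\log\sin(\pi n/b)$—logarithms of algebraic numbers accessible to Baker's theorem—there is no finite ``$q$-Gauss'' closed form for the individual $\psi_q(a/b)$; these are Lambert-type series in $q^{-1/b}$ whose arithmetic nature is far less understood. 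Establishing their $\mathbb F$-linear independence from $1$, from $\log(q-1)/\log q$, and from $\pi$ would require either an explicit evaluation of the even series in logarithmic form or a new transcendence input, of Mahler–Nesterenko type, adapted to number fields; securing this is exactly what separates the portion one can prove unconditionally, namely the antisymmetric part, from the full conjecture.
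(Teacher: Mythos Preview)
The statement you are attempting is \emph{Conjecture~C}; the paper does not prove it. What the paper actually establishes is the lower bound $\dim_{\mathbb F}V_{\mathbb F}(q,b)\ge \varphi(b)/2+1$ (Theorem~\ref{T4}), obtained precisely from the antisymmetric half you describe. Your proposal is therefore not a proof of the conjecture, and you say so yourself: the symmetric step is left open, and you correctly diagnose it as the genuine obstruction.

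On the antisymmetric side your outline is in spirit the same as the paper's, but the paper is more direct and avoids a circularity that creeps into your version. You start from a putative relation among the individual $\gamma_0(q,a/b)$, split into odd and even parts, and then need the clause ``granting that the symmetric $q$-series are themselves linearly independent from $\pi$ over $\overline{\mathbb Q}$'' before you can isolate the $\pi$-coefficient. That clause is exactly the unproven half of the conjecture, so as written your odd-part argument is conditional on the hard part. The paper sidesteps this entirely: it works from the outset with the differences $\gamma_0(q,a/b)-\gamma_0(q,1-a/b)$, uses Theorem~\ref{T2} to write them as $(q-1)\pi\cot(\pi a/b)/\log q$ plus an explicit algebraic number, and then the dichotomy ``algebraic versus transcendental multiple of $\pi/\log q$'' (Lemma~\ref{L3}) together with the Murty--Saradha extension of Okada (Theorem~\ref{T5}) forces all coefficients to vanish. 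No symmetric series ever appear, so no extra hypothesis is needed to get the lower bound.

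Your concluding remarks on the symmetric part are accurate: there is no $q$-analogue of Gauss's digamma evaluation that would reduce $\psi_q(a/b)+\psi_q(1-a/b)$ to a finite $\overline{\mathbb Q}$-combination of logarithms of algebraic numbers, and without such an input Baker's theorem gives no purchase. This is precisely why the full equality $\dim_{\mathbb F}V_{\mathbb F}(q,b)=\varphi(b)+1$ remains conjectural in the paper.
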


	It is important to observe that the $\mathbb{F}$-dimension of $V_\mathbb{F}(q,b)$, given fixed values of $q$ and $b$, is dependent upon the arithmetic properties of the encompassing number field. Nevertheless, when the number field $\mathbb{F}$ is linearly disjoint from the cyclotomic field $\mathbb{Q}(\zeta_b)$, that is, $\mathbb{F} \cap \mathbb{Q}(\zeta_b) = \mathbb{Q}$, the expectation is that the situation will parallel the notion of linear independence over $\mathbb{Q}$. Thereby, in this direction, we have the following theorems and corollaries:
	
\begin{theorem} \label{T3}
	Let $q>1$ be any positive algebraic number and $b \geq 3$ be any integer. Let $\mathbb{F}$ be a number field such that $\mathbb{F} \cap \mathbb{Q}(\zeta_b) = \mathbb{Q}$. Then, the following set of numbers:
	$$S=\left \{	\gamma_0 \left(q, \frac{a}{b} \right) - \gamma_0 \left(q,1- \frac{a}{b} \right):1 \leq a < \frac{b}{2}, (a,b)=1\right \}$$
	is linearly independent over $\mathbb{F}$. In particular, any non-zero $\mathbb{F}$-linear combination of the elements of $S$ is a transcendental number.
\end{theorem}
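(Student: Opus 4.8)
The plan is to prove the stronger transcendence assertion directly, since linear independence over $\mathbb{F}$ follows from it immediately: a nontrivial vanishing relation would exhibit $0$, an algebraic number, as a nonzero $\mathbb{F}$-combination of elements of $S$. So I would suppose, toward a contradiction, that there are $c_a\in\mathbb{F}$, not all zero, for which
\[
\Sigma:=\sum_{\substack{1\le a<b/2\\ (a,b)=1}} c_a\Big(\gamma_0\big(q,\tfrac{a}{b}\big)-\gamma_0\big(q,1-\tfrac{a}{b}\big)\Big)
\]
is algebraic. Substituting the closed form of Theorem \ref{T2}, I would rewrite this as $\Sigma=\frac{q-1}{\log q}\,\pi\,\delta+\alpha$, where $\delta:=\sum c_a\cot(\pi a/b)$ and $\alpha:=(2q-3)\sum c_a\big(\tfrac12-\tfrac{a}{b}\big)$. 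Because $q$ is algebraic and each $\cot(\pi a/b)$ is a trigonometric value at a rational multiple of $\pi$, hence algebraic, both $\delta$ and $\alpha$ are algebraic numbers.

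The next step is a Baker-type argument forcing $\delta=0$. From the rewriting, $\frac{q-1}{\log q}\,\pi\,\delta=\Sigma-\alpha$ is algebraic, so $\pi\,\delta\,(q-1)=(\Sigma-\alpha)\log q$. Using $i\pi=\log(-1)$, this is an equality
\[
-\,i\,\delta\,(q-1)\,\log(-1)-(\Sigma-\alpha)\,\log q=0
\]
with algebraic coefficients $-i\delta(q-1)$ and $-(\Sigma-\alpha)$. Since $q>1$, the number $\log q$ is real and nonzero while $\log(-1)=i\pi$ is purely imaginary and nonzero, so $\log(-1)$ and $\log q$ are linearly independent over $\mathbb{Q}$; by Baker's theorem on linear forms in logarithms they remain linearly independent over the field $\overline{\mathbb{Q}}$ of algebraic numbers. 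Hence both coefficients vanish, and as $q\ne1$ this yields $\delta=\sum c_a\cot(\pi a/b)=0$.

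It then remains to deduce $c_a=0$ for all $a$, which is where the hypothesis $\mathbb{F}\cap\mathbb{Q}(\zeta_b)=\mathbb{Q}$ enters, and which I expect to be the crux. The key observation is that although $\cot(\pi a/b)\notin\mathbb{Q}(\zeta_b)$ in general, the twisted values $\eta_a:=i\cot(\pi a/b)=-\frac{\zeta_b^{a}+1}{\zeta_b^{a}-1}$ all lie in $\mathbb{Q}(\zeta_b)$; multiplying the relation $\delta=0$ by $i$ turns it into $\sum c_a\eta_a=0$ with every $\eta_a\in\mathbb{Q}(\zeta_b)$. I would then invoke Okada's criterion to conclude that the $\varphi(b)/2$ numbers $\{\cot(\pi a/b)\}$, equivalently $\{\eta_a\}$, are linearly independent over $\mathbb{Q}$. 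Finally, since $\mathbb{Q}(\zeta_b)/\mathbb{Q}$ is Galois, the condition $\mathbb{F}\cap\mathbb{Q}(\zeta_b)=\mathbb{Q}$ is equivalent to $\mathbb{F}$ and $\mathbb{Q}(\zeta_b)$ being linearly disjoint over $\mathbb{Q}$; by definition this means that any subset of $\mathbb{Q}(\zeta_b)$ which is linearly independent over $\mathbb{Q}$ stays linearly independent over $\mathbb{F}$. Applying this to $\{\eta_a\}$ forces $c_a=0$ for every $a$, contradicting the nontriviality of $(c_a)$. Thus $\Sigma$ is transcendental, and in particular $S$ is linearly independent over $\mathbb{F}$. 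The hardest part is precisely this passage from $\mathbb{Q}$- to $\mathbb{F}$-linear independence: it works because the factor $i$ places the relevant algebraic numbers inside $\mathbb{Q}(\zeta_b)$, which is exactly what makes the linear-disjointness hypothesis the right one.
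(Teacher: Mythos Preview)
Your proof is correct and follows essentially the same route as the paper: substitute the closed form of Theorem~\ref{T2}, use a Baker-type argument to force the cotangent sum to vanish, then use the number-field version of Okada's criterion to kill the coefficients. The only difference is packaging: where the paper cites Lemma~\ref{L3} (transcendence of $\pi/\log q$) and Theorem~\ref{T5} (Murty--Saradha's Okada over $\mathbb{F}$) as black boxes, you unpack both---invoking Baker directly on $\log(-1),\log q$, and reproving Theorem~\ref{T5} via the linear-disjointness argument, which is precisely how Murty--Saradha established it.
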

	
\begin{corollary}\label{C1}
	Let $q>1$ be any positive algebraic number and $b \geq 3$ be any integer. Let $\mathbb{F}$ be a number field such that $\mathbb{F} \cap \mathbb{Q}(\zeta_b) = \mathbb{Q}$. Then, the following set of numbers:
	$$ \left \{1, \gamma_0 \left(q, \frac{a}{b} \right) - \gamma_0 \left(q,1- \frac{a}{b} \right):1 \leq a < \frac{b}{2}, (a,b)=1 \right \}$$
	is linearly independent over $\mathbb{F}$. 
\end{corollary}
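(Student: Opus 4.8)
The plan is to deduce the corollary directly from the transcendence assertion already packaged inside Theorem \ref{T3}, so almost no new work is required. The decisive structural fact is that $\mathbb{F}$ is a number field, hence $\mathbb{F} \subseteq \overline{\mathbb{Q}}$ and every element of $\mathbb{F}$ is algebraic; meanwhile the ``in particular'' clause of Theorem \ref{T3} guarantees that every non-zero $\mathbb{F}$-linear combination of the elements of $S$ is transcendental. These two properties cannot coexist unless the combination in question is trivial, and it is exactly this incompatibility that upgrades the linear independence of $S$ to the linear independence of $\{1\} \cup S$.

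Concretely, I would begin by supposing an $\mathbb{F}$-linear dependence
\begin{align*}
c_0 + \sum_{\substack{1 \leq a < b/2 \\ (a,b)=1}} c_a \left( \gamma_0\!\left(q,\tfrac{a}{b}\right) - \gamma_0\!\left(q,1-\tfrac{a}{b}\right) \right) = 0,
\end{align*}
with $c_0$ and all the $c_a$ lying in $\mathbb{F}$, and aim to force every coefficient to vanish. The next step is to show that the $c_a$ are all zero. If not, then the sum $\sum_a c_a\bigl(\gamma_0(q,a/b) - \gamma_0(q,1-a/b)\bigr)$ is a non-zero $\mathbb{F}$-linear combination of the elements of $S$, so by Theorem \ref{T3} it is transcendental. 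But rearranging the dependence relation shows this same sum equals $-c_0 \in \mathbb{F}$, which is algebraic, and a transcendental number cannot equal an algebraic one. This contradiction yields $c_a = 0$ for every admissible $a$.

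Finally, once all the $c_a$ vanish the relation collapses to $c_0 = 0$, so every coefficient is zero and the set $\{1\} \cup S$ is linearly independent over $\mathbb{F}$, as claimed. I do not expect any genuine obstacle here: the entire analytic and Diophantine content has already been absorbed into the transcendence statement of Theorem \ref{T3}, and the only point requiring care is the routine but essential observation that the constant coefficient $c_0$ is algebraic because $\mathbb{F}$ is a number field. (One could phrase the argument slightly more uniformly by noting that adjoining $1$ to $S$ cannot create a dependence precisely because $1$ is algebraic while all non-trivial combinations drawn from $S$ are transcendental.)
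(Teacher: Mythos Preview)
Your argument is correct and is exactly the mechanism the paper has in mind: the paper's own proof simply records that the corollary is an immediate consequence of Theorem~\ref{T2} and Theorem~\ref{T3}, without spelling out the contradiction you make explicit. Your write-up actually sharpens this slightly, since you observe that the transcendence clause of Theorem~\ref{T3} alone (which already absorbs the formula from Theorem~\ref{T2}) is enough to force $c_0=0$ and hence all coefficients to vanish.
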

	
\begin{theorem} \label{T4}
	Let $q>1$ be any positive algebraic number and $b \geq 3$ be any integer. Let $\mathbb{F}$ be a number field such that $\mathbb{F} \cap \mathbb{Q}(\zeta_b) = \mathbb{Q}$. Then, at least 
	$$ \frac{\varphi(b)}{2} +1$$
	many numbers of the set $$\left\{1, \gamma_0\left(q, \frac{a}{b}\right): 1 \leq a < b, (a,b)=1 \right\}$$ are linearly independent over $\mathbb{F}$.\\
	Equivalently, $$dim_{\mathbb{F}}V_{\mathbb{F}}(q,b) \geq \frac{\varphi(b)}{2} +1.$$
\end{theorem}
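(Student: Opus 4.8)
The plan is to deduce Theorem \ref{T4} directly from Theorem \ref{T3} (and its Corollary \ref{C1}) by a dimension-counting argument. First I would observe that the set
$$T = \left\{1, \gamma_0\left(q, \tfrac{a}{b}\right) : 1 \leq a < b,\ (a,b)=1\right\}$$
has exactly $\varphi(b)+1$ elements, and that for each $a$ with $1 \leq a < b/2$ and $(a,b)=1$, the integer $b-a$ also satisfies $(b-a,b)=1$ and $b/2 < b-a < b$, so the indices split into the $\varphi(b)/2$ pairs $\{a,\, b-a\}$. The key point is that the difference
$$\delta_a := \gamma_0\left(q, \tfrac{a}{b}\right) - \gamma_0\left(q, 1-\tfrac{a}{b}\right) = \gamma_0\left(q, \tfrac{a}{b}\right) - \gamma_0\left(q, \tfrac{b-a}{b}\right)$$
lies in the $\mathbb{F}$-span of $T$, being an $\mathbb{F}$-linear (indeed integer) combination of two of its generators.

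Next I would set up the counting. Let $W$ denote the $\mathbb{F}$-span of $T$, so that $\dim_{\mathbb{F}} W = \dim_{\mathbb{F}} V_{\mathbb{F}}(q,b)$. By Corollary \ref{C1}, the $\varphi(b)/2 + 1$ numbers
$$\left\{1\right\} \cup \left\{\delta_a : 1 \leq a < \tfrac{b}{2},\ (a,b)=1\right\}$$
are linearly independent over $\mathbb{F}$. Since each $\delta_a$ and the element $1$ all belong to $W$, these $\varphi(b)/2 + 1$ linearly independent vectors form part of $W$, whence
$$\dim_{\mathbb{F}} V_{\mathbb{F}}(q,b) = \dim_{\mathbb{F}} W \geq \frac{\varphi(b)}{2} + 1.$$
The equivalence between the dimension statement and the assertion that at least $\varphi(b)/2 + 1$ elements of $T$ are linearly independent over $\mathbb{F}$ is immediate, since the maximal number of $\mathbb{F}$-linearly independent elements of a finite spanning set equals the dimension of its span.

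The real content of the theorem is entirely carried by Theorem \ref{T3} and Corollary \ref{C1}; the present argument is a short linear-algebra reduction. Consequently there is no genuine obstacle to overcome here — the only thing to be careful about is verifying that $1$ is genuinely independent from the differences $\delta_a$ (which is exactly the strengthening provided by Corollary \ref{C1} over Theorem \ref{T3}), and that the pairing $a \mapsto b-a$ on the reduced residues coprime to $b$ is well defined and fixed-point-free for $b \geq 3$, so that the $\varphi(b)/2$ differences are indexed without repetition. I would close by remarking that the bound is expected to be far from sharp: Conjecture C predicts the full value $\varphi(b)+1$, and the gap between $\varphi(b)/2+1$ and $\varphi(b)+1$ reflects exactly the differences we cannot yet show to contribute independent dimensions, namely the sums $\gamma_0(q,\tfrac{a}{b}) + \gamma_0(q,1-\tfrac{a}{b})$.
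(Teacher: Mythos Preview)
Your proposal is correct and follows essentially the same approach as the paper: both arguments observe that the differences $\gamma_0(q,a/b)-\gamma_0(q,1-a/b)$ lie in $V_{\mathbb{F}}(q,b)$ and then invoke Corollary~\ref{C1} to obtain $\varphi(b)/2+1$ linearly independent elements there. Your write-up is in fact slightly more explicit than the paper's about the linear-algebra reduction and about why the dimension bound is equivalent to the existence of $\varphi(b)/2+1$ linearly independent members of the original generating set.
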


	Now, we look at the case when the number fields intersect $\mathbb{Q}(\zeta_b)$ non-trivially, that is, $\mathbb{F} \cap \mathbb{Q}(\zeta_b) \neq \mathbb{Q}$. In this regard, first we define the following $\mathbb{F}$-linear spaces:
	
\begin{definition}
	Let $q>1$ be any positive rational number and $b \geq 3$ be any integer. Then, $\widehat{V}_\mathbb{F}(q,b)$ is the $\mathbb{F}$-linear space which is defined as: 
	$$\widehat{V}_\mathbb{F}(q,b) = \mathbb{F}-span~ of \left\{1, \gamma_0\left(q, \frac{a}{b}\right): 1 \leq a < b, (a,b)=1 \right\}.$$
\end{definition}
\textbf{Note:} In $V_\mathbb{F}(q,b)$, $q>1$ is any positive algebraic number, whereas in $\widehat{V}_\mathbb{F}(q,b)$, $q>1$ is any positive rational number.

	Now, we present the following theorems concerning $\widehat{V}_\mathbb{F}(q,b)$:
\begin{theorem} \label{T7}
	Let $q>1$ be any positive rational number and $b \geq 3$ be any integer. For $ 1 \leq a < b$, with $(a,b)=1$, let $\kappa_a$ be defined as
	$$\kappa_a = \frac{\log q}{i \pi}\left[\left( \gamma_0 \left(q, \frac{a}{b} \right) - \gamma_0 \left(q,1- \frac{a}{b} \right)\right) - \left( (2q-3) \left(\frac{1}{2} - \frac{a}{b} \right)\right)\right].$$
	If $\kappa_a \in \mathbb{F}$ for some $a$ as above, then
\begin{align*}
	2 \leq dim_{\mathbb{F}}\widehat{V}_{\mathbb{F}}(q,b) \leq \frac{\varphi(b)}{2} +2.
\end{align*}
\end{theorem}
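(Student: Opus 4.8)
The plan is to first convert the hypothesis into a purely field-theoretic statement and then to exploit the symmetry $a\leftrightarrow b-a$. Substituting the closed form of Theorem~\ref{T2} into the definition of $\kappa_a$ cancels the factor $\log q/(i\pi)$ against the transcendental term, leaving
\begin{align*}
	\kappa_a = -\,i(q-1)\cot\!\left(\frac{\pi a}{b}\right) = (q-1)\,\frac{\zeta_b^{a}+1}{\zeta_b^{a}-1}, \qquad \zeta_b = e^{2\pi i/b}.
\end{align*}
Thus $\kappa_a$ is an \emph{algebraic} number lying in $\mathbb{Q}(\zeta_b)$. The first key observation is that $\kappa_a$ in fact generates the whole cyclotomic field: solving the displayed relation gives $\zeta_b^{a}=(\kappa_a+(q-1))/(\kappa_a-(q-1))$, so $\zeta_b^{a}\in\mathbb{Q}(\kappa_a)$, and since $(a,b)=1$ the element $\zeta_b^{a}$ is a primitive $b$-th root of unity; hence $\mathbb{Q}(\kappa_a)=\mathbb{Q}(\zeta_b)$. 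Consequently the hypothesis $\kappa_a\in\mathbb{F}$ for a single admissible $a$ is equivalent to $\mathbb{Q}(\zeta_b)\subseteq\mathbb{F}$, and therefore forces $\kappa_{a'}\in\mathbb{F}$ for \emph{every} $a'$ coprime to $b$.

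For the upper bound I would reorganise the spanning set of $\widehat{V}_{\mathbb{F}}(q,b)$ through the involution $a\mapsto b-a$. Writing $S_a=\gamma_0(q,a/b)+\gamma_0(q,1-a/b)$ and $D_a=\gamma_0(q,a/b)-\gamma_0(q,1-a/b)$ for the $\varphi(b)/2$ indices $a$ with $1\le a<b/2$ and $(a,b)=1$, and using that $2$ is invertible, the $\mathbb{F}$-span of $\{1\}\cup\{\gamma_0(q,a/b)\}$ equals that of $\{1\}\cup\{S_a\}\cup\{D_a\}$. Setting $\alpha=i\pi/\log q$, Theorem~\ref{T2} reads $D_a=\alpha\kappa_a+r_a$ with $r_a=(2q-3)(\tfrac12-\tfrac{a}{b})\in\mathbb{Q}$. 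Since the chosen $\kappa_a\neq 0$ lies in $\mathbb{F}$, the relation for that index gives $\alpha=\kappa_a^{-1}(D_a-r_a)\in\widehat{V}_{\mathbb{F}}(q,b)$; and because \emph{all} $\kappa_{a'}\in\mathbb{F}$, every $D_{a'}=\alpha\kappa_{a'}+r_{a'}$ lies in the two-dimensional space spanned by $1$ and $\alpha$. Hence $\widehat{V}_{\mathbb{F}}(q,b)$ is generated by $\{1,\alpha\}\cup\{S_a:1\le a<b/2,\ (a,b)=1\}$, a set of at most $2+\varphi(b)/2$ elements, which yields $\dim_{\mathbb{F}}\widehat{V}_{\mathbb{F}}(q,b)\le \varphi(b)/2+2$.

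For the lower bound it suffices to exhibit two $\mathbb{F}$-independent elements. An admissible index always exists, since $b\ge 3$ makes $a=1$ valid; for it the difference $D_1=\gamma_0(q,1/b)-\gamma_0(q,1-1/b)$ lies in $\widehat{V}_{\mathbb{F}}(q,b)$ and is transcendental by Theorem~\ref{T2}, whereas every element of the number field $\mathbb{F}$ is algebraic. Thus $1$ and $D_1$ are linearly independent over $\mathbb{F}$, giving $\dim_{\mathbb{F}}\widehat{V}_{\mathbb{F}}(q,b)\ge 2$. The trigonometric simplification of $\kappa_a$ and the passage to sums and differences are routine; the substantive point, and the main obstacle, is recognising that membership of a \emph{single} $\kappa_a$ in $\mathbb{F}$ is not an accident but is equivalent to the containment $\mathbb{Q}(\zeta_b)\subseteq\mathbb{F}$, which is exactly what collapses all $\varphi(b)/2$ differences $D_a$ into the one transcendental direction $\alpha$. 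Note that the transcendence of $\alpha=i\pi/\log q$ itself (provable via Baker's theorem) is not needed here, since Theorem~\ref{T2} already supplies a transcendental element of the space for the lower bound.
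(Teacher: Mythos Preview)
Your proof is correct and follows the same architecture as the paper's: identify $\kappa_a$ as an element of $\mathbb{Q}(\zeta_b)$, deduce from the hypothesis that \emph{all} $\kappa_{a'}$ lie in $\mathbb{F}$, split the generators of $\widehat{V}_{\mathbb{F}}(q,b)$ into sums $S_a$ and differences $D_a$, and collapse the $\varphi(b)/2$ differences into the single transcendental direction $i\pi/\log q$ plus constants. The execution differs from the paper in three places worth recording. First, to place $\kappa_a$ inside $\mathbb{Q}(\zeta_b)$ the paper goes through Lemma~\ref{L4} (an $L$-function\,/\,Bernoulli-polynomial identity for $\cot(\pi a/b)$), whereas your closed form $(q-1)(\zeta_b^{a}+1)/(\zeta_b^{a}-1)$ makes this immediate. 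Second, to pass from one $\kappa_a\in\mathbb{F}$ to all of them, the paper works in $F=\mathbb{F}\cap\mathbb{Q}(\zeta_b)$ and uses the Galois action $\sigma_r:\zeta_b\mapsto\zeta_b^{r}$, which permutes the $\kappa_a$; your inversion $\zeta_b^{a}=(\kappa_a+(q-1))/(\kappa_a-(q-1))$ is more elementary and yields the sharper conclusion $\mathbb{Q}(\zeta_b)\subseteq\mathbb{F}$ outright. Third, for the lower bound the paper appeals to Lemma~\ref{L3} (hence Baker's theorem) to obtain the $\mathbb{F}$-independence of $1$ and $\pi/\log q$, while you simply use that $D_1$ is transcendental by Theorem~\ref{T2}; both are valid, and your route avoids a separate invocation of Baker at this step.
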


	In light of the preceding theorem, the following outcome can be established:
\begin{corollary}\label{C3}
	Let $q>1$ be any positive rational number and $b \geq 3$ be any integer. Let $\mathbb{F} \cap \mathbb{Q}(\zeta_b) = \mathbb{Q}(\zeta_b)$. Then, we have the following relation:
\begin{align*}
	2 \leq dim_{\mathbb{F}}\widehat{V}_{\mathbb{F}}(q,b) \leq \frac{\varphi(b)}{2} +2.
\end{align*}
\end{corollary}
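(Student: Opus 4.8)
The plan is to deduce Corollary \ref{C3} directly from Theorem \ref{T7} by showing that the hypothesis ``$\kappa_a \in \mathbb{F}$ for some $a$'' is automatically fulfilled under the assumption $\mathbb{F} \cap \mathbb{Q}(\zeta_b) = \mathbb{Q}(\zeta_b)$. First I would observe that this equality is nothing but the inclusion $\mathbb{Q}(\zeta_b) \subseteq \mathbb{F}$, so it suffices to prove that every $\kappa_a$ lies in the cyclotomic field $\mathbb{Q}(\zeta_b)$; the desired bounds then follow verbatim from Theorem \ref{T7}.

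To compute $\kappa_a$ explicitly, I would substitute the closed form of Theorem \ref{T2} into the defining expression. Writing the difference $\gamma_0(q, a/b) - \gamma_0(q, 1 - a/b)$ as $\frac{q-1}{\log q}\pi\cot(\pi a/b) + (2q-3)(\frac12 - \frac{a}{b})$ and inserting it into $\kappa_a$, the affine term $(2q-3)(\frac12 - \frac{a}{b})$ cancels against the subtracted copy, and the factors $\log q$ and $\pi$ cancel as well, leaving the clean expression
\begin{align*}
	\kappa_a = \frac{q-1}{i}\cot\left(\frac{\pi a}{b}\right).
\end{align*}

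The key step is then to recognize the right-hand side as an element of $\mathbb{Q}(\zeta_b)$. Using $e^{2\pi i a/b} = \zeta_b^a$, one has $\cot(\pi a/b) = i\,\frac{\zeta_b^a + 1}{\zeta_b^a - 1}$, so that $\kappa_a = (q-1)\frac{\zeta_b^a + 1}{\zeta_b^a - 1}$. Since $q > 1$ is \emph{rational}---the hypothesis distinguishing $\widehat{V}_{\mathbb{F}}(q,b)$ from $V_{\mathbb{F}}(q,b)$, and precisely what is needed here---the scalar $q-1$ lies in $\mathbb{Q}$, whence $\kappa_a \in \mathbb{Q}(\zeta_b)$ for every admissible $a$. (One checks $\kappa_a \neq 0$, since $\zeta_b^a = -1$ would force $b = 2a$, impossible for $(a,b)=1$ with $b \geq 3$; but this is not even required, as Theorem \ref{T7} only asks for membership in $\mathbb{F}$.) Given $\mathbb{Q}(\zeta_b) \subseteq \mathbb{F}$, we conclude $\kappa_a \in \mathbb{F}$, and Theorem \ref{T7} immediately yields $2 \leq \dim_{\mathbb{F}}\widehat{V}_{\mathbb{F}}(q,b) \leq \frac{\varphi(b)}{2} + 2$.

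I anticipate no serious obstacle: the argument is essentially a substitution followed by a cyclotomic-membership check. The only subtlety worth emphasizing is the role of the rationality of $q$; were $q$ merely algebraic, $q - 1$ need not lie in $\mathbb{Q}(\zeta_b)$ and the membership of $\kappa_a$ in $\mathbb{F}$ could fail, which is exactly why the ambient space here is $\widehat{V}_{\mathbb{F}}(q,b)$ with $q$ rational rather than $V_{\mathbb{F}}(q,b)$ with $q$ algebraic.
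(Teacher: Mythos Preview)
Your proposal is correct and follows essentially the same approach as the paper: both deduce the corollary from Theorem~\ref{T7} by noting that $\mathbb{F}\cap\mathbb{Q}(\zeta_b)=\mathbb{Q}(\zeta_b)$ means $\mathbb{Q}(\zeta_b)\subseteq\mathbb{F}$, and that each $\kappa_a$ lies in $\mathbb{Q}(\zeta_b)$ (this last fact is already established inside the paper's proof of Theorem~\ref{T7} via the identity $i\cot(\pi a/b)=\frac{1+\zeta_b^a}{1-\zeta_b^a}$, which is exactly your computation). The paper's own proof is a single sentence asserting that the result ``directly follows from Theorem~\ref{T7}''; you have simply spelled out the verification in full, including the useful remark on why the rationality of $q$ is essential.
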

	
\section{\bf Notations and Preliminaries}
	In this section, we focus on exploring the notations associated with the $q$-series, along with fundamental results that will have a significant impact on the forthcoming sections. \\
	Let $a$ be a complex number. The $q$-analogue of $a$ is expressed by:\\
\begin{align*}
	[a]_q = \frac{q^a - 1}{q - 1}, ~~~~~ q \neq 1.
\end{align*}
	Additionally, the $q$-shifted factorial of $a$ is defined as:
\begin{align*}
	(a;q)_0&=1, ~~~~~ (a;q)_n = \displaystyle\prod_{m=0}^{n-1} (1-aq^n), ~~~~~ n=1,2, \ldots\\
	(a;q)_{\infty}& =\lim_{n\rightarrow\infty}(a;q)_n = \displaystyle\prod_{n=0}^{\infty} (1-aq^n).
\end{align*}
	Furthermore, the $q$-analogue of the Lambert series is represented as:
\begin{align*}
	\mathscr{L}_q(s,x) = \sum_{k=1}^{\infty}\frac{k^s q^{kx}}{1-q^k},~~ s\in \mathbb{C},
\end{align*}
	where $|q| < 1$ and $x>0$. 
%
	
	Next, we examine certain results that play a pivotal role in establishing our results. In 1981, T.\,Okada \cite{TO} established a linear independence result to encompass all derivatives of cotangent values. The formulation of his result is outlined below:
	
\begin{theorem}
	Let $k$ and $q$ be positive integers with $k \geq 1$ and $q > 2$. Let $T$ be a set of $\varphi(q)/2$ representations $\bmod~q$ such that the union $T \cup (-T)$ constitutes a complete set of co-prime residue classes $\bmod~q$. Then, the following set of real numbers: 
\begin{align*}
	\frac{d^{k-1}}{dz^{k-1}} \cot (\pi z)|_{z = a/q},~~~~ a \in T
\end{align*}
	is linearly independent over $\mathbb{Q}$.
\end{theorem}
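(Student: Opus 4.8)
The plan is to convert the cotangent derivatives into lattice sums / Dirichlet $L$-values and then run a character-theoretic linear-algebra argument. First I would differentiate the Herglotz--Mittag-Leffler expansion $\pi\cot(\pi z)=\sum_{n\in\mathbb{Z}}(z+n)^{-1}$ term by term $k-1$ times and evaluate at $z=a/q$, obtaining
$$\frac{d^{k-1}}{dz^{k-1}}\cot(\pi z)\Big|_{z=a/q}=\frac{(-1)^{k-1}(k-1)!}{\pi}\,q^{k}\sum_{n\in\mathbb{Z}}\frac{1}{(a+nq)^{k}},$$
with the case $k=1$ requiring the symmetric (principal-value) pairing of $\pm n$ for convergence. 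Hence the $\mathbb{Q}$-linear independence of the cotangent derivatives is equivalent to that of the lattice sums $A_k(a/q):=\sum_{n\in\mathbb{Z}}(a+nq)^{-k}$ for $a\in T$.

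Next I would record the parity relation $A_k((q-a)/q)=(-1)^k A_k(a/q)$, which is precisely why $T$ carries only half of the coprime residues. Given a putative relation $\sum_{a\in T}c_a A_k(a/q)=0$ with $c_a\in\mathbb{Q}$, I would extend the coefficients to all coprime residues by setting $d_a=c_a$ for $a\in T$ and $d_{q-a}=(-1)^k c_a$, so that $d$ is a rational-valued function on $(\mathbb{Z}/q\mathbb{Z})^\times$ of parity $(-1)^k$, and rewrite the relation as $\sum_{(m,q)=1}d_{m\bmod q}\,m^{-k}=0$ (the sum running over all nonzero $m\in\mathbb{Z}$). Expanding $d$ in Dirichlet characters mod $q$ and applying orthogonality, every character of parity $\neq(-1)^k$ drops out and the relation collapses to
$$\sum_{\substack{\chi\bmod q\\ \chi(-1)=(-1)^k}}\widehat d(\chi)\,L(k,\chi)=0,\qquad \widehat d(\chi)=\sum_{c}d_c\,\overline{\chi}(c),$$
where each $\widehat d(\chi)$ lies in the cyclotomic field $K$ generated by $\zeta_q$ together with the character values. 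Since the finite Fourier transform restricted to parity-$(-1)^k$ functions is invertible (such a function is determined by its values on $T$, and there are exactly $\varphi(q)/2$ characters of that parity), it will suffice to force every $\widehat d(\chi)=0$.

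For the arithmetic input I would invoke the closed evaluation $L(k,\chi)=\pi^{k}G(\chi)$ with $G(\chi)\in K$ nonzero whenever $\chi(-1)=(-1)^k$, obtained from the generalized Bernoulli number $B_{k,\overline{\chi}}\neq0$ in the matching-parity range and the Gauss sum $\tau(\chi)$ (for imprimitive $\chi$ one further checks that the Euler factors $\prod_{p\mid q}\bigl(1-\chi^{*}(p)p^{-k}\bigr)$ are nonzero, which is clear from $|\chi^{*}(p)|\le 1<p^{k}$). Dividing out $\pi^{k}$ converts the transcendental relation into the purely algebraic identity $\sum_{\chi}\widehat d(\chi)G(\chi)=0$ in $K$.

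The main obstacle is to pass from this single algebraic identity to the vanishing of each $\widehat d(\chi)$, and this is where the rationality of the $c_a$ must be exploited. Because the $d_c$ are rational, applying $\mathrm{Gal}(K/\mathbb{Q})$ sends $\widehat d(\chi)$ to $\widehat d(\chi^{t})$ (permuting the parity-$(-1)^k$ characters via $\chi\mapsto\chi^{t}$) while transforming the Gauss-sum factors $G(\chi)$ by explicit roots of unity. This yields a full Galois-orbit system of algebraic relations whose coefficient matrix is a nonsingular Vandermonde/character matrix; its invertibility, combined with $G(\chi)\neq0$, forces $\widehat d(\chi)=0$ for every $\chi$ of parity $(-1)^k$, whence $c_a=0$ for all $a\in T$ by Fourier inversion. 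The delicate points I expect to need care are the identification of the correct cyclotomic field $K$ (character values need not lie in $\mathbb{Q}(\zeta_q)$), the precise compatibility of the Galois action on $\widehat d(\chi)$ with that on $G(\chi)$, and the nonvanishing $B_{k,\overline{\chi}}\neq0$ in the matching-parity range.
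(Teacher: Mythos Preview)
This theorem is not proved in the paper. It is quoted in Section~2 (Notations and Preliminaries) as Okada's 1981 result, cited to \cite{TO} without argument; the paper subsequently uses only its number-field extension (Theorem~\ref{T5}, due to Murty--Saradha \cite{MS1}) as a black box in the proofs of Theorems~\ref{T3} and~\ref{T4}. There is thus no in-paper proof to compare your attempt against.

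As for the attempt itself, the strategy is the standard one and is essentially how such results are established in the literature: normalize by the appropriate power of $\pi$ to reduce to a $\mathbb{Q}$-linear independence question about explicit algebraic numbers in a cyclotomic field, then diagonalize by Dirichlet characters of the correct parity and use the nonvanishing of $B_{k,\overline\chi}$ in the matching-parity range. The one under-specified step is your closing Galois argument. You assert that applying $\mathrm{Gal}(K/\mathbb{Q})$ to the single identity $\sum_\chi \widehat d(\chi)G(\chi)=0$ yields a system with a ``nonsingular Vandermonde/character'' coefficient matrix, but $\sigma(G(\chi))$ is not simply a root of unity times $G(\chi^{t})$: the Galois action on $\tau(\chi)$, on $B_{k,\overline\chi}$, and (when relevant) on the factor $i^k$ all interfere, and since $K$ must contain both $\zeta_q$ and all character values, each $\sigma$ moves both ingredients simultaneously. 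The repair is routine but not quite as you describe it; a cleaner variant is to apply Galois \emph{before} the character expansion, directly to the normalized values $\pi^{\,1-k}\dfrac{d^{k-1}}{dz^{k-1}}\cot(\pi z)\big|_{z=a/q}$, which the Galois group permutes among themselves (up to the parity sign). A rational relation then propagates to a full $(\mathbb{Z}/q\mathbb{Z})^\times$-orbit of relations whose coefficient matrix is literally the group matrix on functions of parity $(-1)^k$, and diagonalizing that by characters together with $L(k,\chi)\neq 0$ forces every coefficient to vanish.
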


	Following that, in 2007, a thorough investigation undertaken by Murty and Saradha \cite{MS1} concerning Okada's findings led to the formulation of the following theorem for number fields $\mathbb{F}$:
\begin{theorem} \label{T5}
	Let $k$ and $q$ be positive integers with $k \geq 1$ and $q > 2$. Let $T$ be a set of $\varphi(q)/2$ representations $\bmod~q$ such that the union $T \cup (-T)$ constitutes a complete set of co-prime residue classes $\bmod~q$. Let $\mathbb{F}$ be a number field such that $\mathbb{F} \cap \mathbb{Q}(\zeta_q) = \mathbb{Q}$. Then, the following set of real numbers: 
\begin{align*}
	\frac{d^{k-1}}{dz^{k-1}} \cot (\pi z)|_{z = a/q},~~~~ a \in T
\end{align*}
	is linearly independent over $\mathbb{F}$.
\end{theorem}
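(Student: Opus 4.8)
The plan is to convert the assertion — a statement about linear independence of transcendental real numbers over a number field — into a purely algebraic linear independence statement over the cyclotomic field $\mathbb{Q}(\zeta_q)$, and then to invoke Okada's theorem (the preceding theorem, over $\mathbb{Q}$) together with the hypothesis $\mathbb{F}\cap\mathbb{Q}(\zeta_q)=\mathbb{Q}$. Writing $v_a := \frac{d^{k-1}}{dz^{k-1}}\cot(\pi z)|_{z=a/q}$, the first and central step is to exhibit a single nonzero constant $\lambda\in\mathbb{C}^\times$, \emph{independent of $a$}, and algebraic numbers $P_a\in\mathbb{Q}(\zeta_q)$ such that $v_a=\lambda\,P_a$ for every $a\in T$.

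To produce this identity I would start from the exponential form $\pi\cot(\pi z)=-\pi i-2\pi i\,\mathrm{Li}_0(e^{2\pi i z})$, where $\mathrm{Li}_0(w)=w/(1-w)$, and use $\frac{d}{dz}\mathrm{Li}_{-n}(e^{2\pi i z})=2\pi i\,\mathrm{Li}_{-n-1}(e^{2\pi i z})$. Differentiating $k-1$ times gives $\frac{d^{k-1}}{dz^{k-1}}\cot(\pi z)=-2^k i^k\pi^{k-1}\,\mathrm{Li}_{1-k}(e^{2\pi i z})$ for $k\ge 2$ (the $k=1$ case being checked directly against the constant term $-\pi i$). Evaluating at $z=a/q$ and using that $\mathrm{Li}_{1-k}=\mathrm{Li}_{-(k-1)}$ is a rational function with rational coefficients of its argument — so $\mathrm{Li}_{1-k}(\zeta_q^a)\in\mathbb{Q}(\zeta_q)$, well defined since $(a,q)=1$ forces $\zeta_q^a\neq 1$ — yields $v_a=\lambda P_a$ with $\lambda=-2^k i^k\pi^{k-1}$ and $P_a=\mathrm{Li}_{1-k}(\zeta_q^a)\in\mathbb{Q}(\zeta_q)$. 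I expect this to be the main obstacle, and the content lies not in the computation but in a structural observation: although an individual $v_a$ may itself lie outside $\mathbb{Q}(\zeta_q)$ (for instance $\cot(\pi/3)=1/\sqrt3\notin\mathbb{Q}(\zeta_3)$), the factor $i^k$ is the \emph{same} for all $a$ and can be absorbed into the global constant $\lambda$, so that the genuinely $a$-dependent part $P_a$ lives in $\mathbb{Q}(\zeta_q)$ rather than in the larger field $\mathbb{Q}(\zeta_{4q})$. This is precisely what makes $\mathbb{F}\cap\mathbb{Q}(\zeta_q)=\mathbb{Q}$ — and not a disjointness condition on $\mathbb{Q}(\zeta_{4q})$ — the correct hypothesis.

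With the identity in hand the remainder is short. Since $\lambda\neq 0$, Okada's theorem (the preceding theorem), which gives $\mathbb{Q}$-linear independence of $\{v_a:a\in T\}$, transfers verbatim to $\mathbb{Q}$-linear independence of $\{P_a:a\in T\}\subset\mathbb{Q}(\zeta_q)$. Now suppose $\sum_{a\in T}c_a v_a=0$ with $c_a\in\mathbb{F}$; dividing by $\lambda$ gives the algebraic relation $\sum_{a\in T}c_a P_a=0$ inside the compositum $\mathbb{F}\,\mathbb{Q}(\zeta_q)$. Because $\mathbb{Q}(\zeta_q)/\mathbb{Q}$ is Galois (indeed abelian), the hypothesis $\mathbb{F}\cap\mathbb{Q}(\zeta_q)=\mathbb{Q}$ is equivalent to $\mathbb{F}$ and $\mathbb{Q}(\zeta_q)$ being linearly disjoint over $\mathbb{Q}$; by the defining property of linear disjointness, a $\mathbb{Q}$-linearly independent subset of $\mathbb{Q}(\zeta_q)$ remains linearly independent over $\mathbb{F}$. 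Hence all $c_a=0$, which is exactly the claimed linear independence of $\{v_a\}$ over $\mathbb{F}$. The only auxiliary points needing care are the $k=1$ normalization and the standard fact that, for the abelian extension $\mathbb{Q}(\zeta_q)$, trivial intersection upgrades to linear disjointness.
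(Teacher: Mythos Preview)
The paper does not give its own proof of this statement: Theorem~\ref{T5} is quoted in the preliminaries as a result of Murty and Saradha \cite{MS1}, with no argument supplied. So there is nothing in the paper to compare your proof against directly.

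That said, your argument is correct and is essentially the standard one. The key step --- writing $v_a=\lambda P_a$ with a single $a$-independent constant $\lambda$ and $P_a\in\mathbb{Q}(\zeta_q)$ --- is exactly what is needed, and your polylogarithm computation delivers it cleanly; the observation that the factor $i^k$ is common to all $a$ (so that only $\mathbb{Q}(\zeta_q)$, not $\mathbb{Q}(\zeta_{4q})$, enters) is the genuine point. The remaining deduction (Okada over $\mathbb{Q}$ plus linear disjointness of $\mathbb{F}$ and $\mathbb{Q}(\zeta_q)$, the latter following from $\mathbb{F}\cap\mathbb{Q}(\zeta_q)=\mathbb{Q}$ since $\mathbb{Q}(\zeta_q)/\mathbb{Q}$ is Galois) is routine and correctly stated. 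For comparison, the paper's own Lemma~\ref{L4} carries out the $k=1$ case of your factorization via the $L$-function/Bernoulli-polynomial route rather than via $\mathrm{Li}_0$; these are the same identity in different dress, and the Murty--Saradha proof in \cite{MS1} proceeds along those lines for general $k$. Your polylogarithm packaging is arguably tidier, since the rationality of $\mathrm{Li}_{1-k}$ makes the membership $P_a\in\mathbb{Q}(\zeta_q)$ transparent without invoking the Fourier expansion of Bernoulli polynomials.
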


	Moreover, in a separate study, Murty and Saradha \cite{MS2} established a result concerning the necessary and sufficient conditions for the convergence of the $L$-function, denoted as $L(k,f)$, for the case $k=1$. This result holds significant importance in the proof of Theorem \ref{T7} and it is outlined as follows:
\begin{theorem}\label{T8}
	Let $f$ be any function defined on the integers and with period $q$. Then,
	$$\sum_{n=1}^{\infty} \frac{f(n)}{n}$$
	converges if and only if
	$$\sum_{a=1}^q f(a)=0,$$
	and in the case of convergence, the value of the series is
	$$-\frac{1}{q} \sum_{a=1}^q f(a) \psi(a / q) .$$
\end{theorem}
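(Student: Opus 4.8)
The plan is to analyze the partial sums taken over complete periods, extract their logarithmic growth, and thereby obtain both the convergence criterion and the closed-form value at once. Throughout, $\psi$ denotes the digamma function, which satisfies the functional equation $\psi(x+1) = \psi(x) + 1/x$.

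First I would group the partial sum $S_{Mq} = \sum_{n=1}^{Mq} f(n)/n$ according to residue classes modulo $q$. Writing each $n$ as $n = mq + a$ with $0 \le m \le M-1$ and $1 \le a \le q$, and using the periodicity $f(mq+a) = f(a)$, this finite rearrangement gives
$$S_{Mq} = \sum_{a=1}^q f(a) \sum_{m=0}^{M-1} \frac{1}{mq+a} = \frac{1}{q}\sum_{a=1}^q f(a)\sum_{m=0}^{M-1}\frac{1}{m + a/q}.$$
Iterating the functional equation for $\psi$ telescopes the inner sum into $\sum_{m=0}^{M-1}\frac{1}{m+a/q} = \psi(M + a/q) - \psi(a/q)$, so that
$$S_{Mq} = \frac{1}{q}\sum_{a=1}^q f(a)\bigl(\psi(M + a/q) - \psi(a/q)\bigr).$$

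Next I would invoke the asymptotic expansion $\psi(y) = \log y + O(1/y)$ as $y \to \infty$, which gives $\psi(M + a/q) = \log M + \varepsilon_a(M)$ with $\varepsilon_a(M) \to 0$ for each fixed $a$. Substituting and separating the leading term yields
$$S_{Mq} = \frac{\log M}{q}\sum_{a=1}^q f(a) \;-\; \frac{1}{q}\sum_{a=1}^q f(a)\psi(a/q) \;+\; \frac{1}{q}\sum_{a=1}^q f(a)\varepsilon_a(M),$$
and the final sum tends to $0$ as $M\to\infty$. Hence if $\sum_{a=1}^q f(a) \neq 0$ the term $\tfrac{\log M}{q}\sum_a f(a)$ forces $S_{Mq} \to \pm\infty$, so the series diverges; while if $\sum_{a=1}^q f(a) = 0$ the logarithmic term vanishes and $S_{Mq} \to -\tfrac{1}{q}\sum_{a=1}^q f(a)\psi(a/q)$. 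This settles the stated necessity and, along the subsequence $N = Mq$, the claimed value.

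Finally I would close the gap between complete periods: for $Mq < N < (M+1)q$ the difference $S_N - S_{Mq}$ consists of fewer than $q$ terms, each bounded in absolute value by $(\max_a |f(a)|)/(Mq)$, so it tends to $0$. Therefore, when $\sum_a f(a) = 0$, the full sequence of partial sums shares the limit of the subsequence, proving both convergence and the evaluation. I expect the main technical point to be the careful control of the asymptotics of $\psi(M + a/q)$—isolating the $\log M$ term and verifying that the remainder genuinely vanishes—since this is precisely what couples the convergence dichotomy to the condition $\sum_{a=1}^q f(a)=0$; the interchange of summations causes no difficulty because it is carried out only on finite sums.
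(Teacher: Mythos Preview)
Your argument is correct: the residue-class decomposition, the telescoping via $\psi(x+1)-\psi(x)=1/x$, the extraction of the $\log M$ term from the asymptotic $\psi(y)=\log y+O(1/y)$, and the final check that partial sums between complete periods differ by $O(1/M)$ all go through exactly as you describe. This is the standard proof of the result.

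There is, however, nothing to compare against in the paper. Theorem~\ref{T8} appears in the preliminaries section as a quoted result of Murty and Saradha~\cite{MS2}; the paper states it without proof and uses it only as input to Lemma~\ref{L4}. So your proposal supplies a proof where the paper simply cites one.
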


	Within the classical framework, Baker's theorem (see ref.\,\cite{AB}) plays a pivotal role in establishing claims related to the logarithms of algebraic numbers. This significance becomes apparent through the following statement:
\begin{theorem} \label{T6}
	If $\alpha_1,\alpha_2,\ldots,\alpha_n$ are non-zero algebraic numbers such that $\log \alpha_1, \ldots, \log \alpha_n$ are linearly independent over the field of rational numbers, then $1, \log \alpha_1, \ldots, \log \alpha_n$ are linearly independent over the field of algebraic numbers.
\end{theorem}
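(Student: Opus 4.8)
The statement is the classical theorem of Baker on linear forms in the logarithms of algebraic numbers, in its inhomogeneous form, and since it is invoked here as an external tool via \cite{AB}, the natural ``proof'' is to recall the strategy of Baker's method rather than to derive it from the preliminaries collected above. The plan is to argue by contradiction: assume that $1, \log\alpha_1,\dots,\log\alpha_n$ are linearly dependent over $\overline{\mathbb{Q}}$, so that there are algebraic numbers $\beta_0,\beta_1,\dots,\beta_n$, not all zero, with
\[
\beta_0 + \beta_1\log\alpha_1 + \cdots + \beta_n\log\alpha_n = 0.
\]
Since $\log\alpha_1,\dots,\log\alpha_n$ are assumed linearly independent over $\mathbb{Q}$, any such vanishing relation must be a genuine $\overline{\mathbb{Q}}$-relation, and the aim is to show no non-trivial one exists. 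The presence of the inhomogeneous constant $\beta_0$ is precisely what distinguishes this from the homogeneous form of the theorem, and it will be accommodated by allowing a genuine polynomial factor in $z$ in the auxiliary function below.

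The engine is the construction of an auxiliary function. First I would build
\[
\Phi(z) = \sum p(\lambda_0,\dots,\lambda_n)\, z^{\lambda_0}\, \alpha_1^{\lambda_1 z}\cdots \alpha_n^{\lambda_n z},
\]
a polynomial in $z$ and in the exponentials $\alpha_j^{z}$ with unknown coefficients $p(\cdot)$ lying in the number field generated by the data (in fact taken to be algebraic integers). Using the assumed relation to express $\log\alpha_n$ in terms of $1,\log\alpha_1,\dots,\log\alpha_{n-1}$, the exponential $\alpha_n^{z}$ is absorbed into the remaining ones; this is exactly where the linear dependence enters and collapses the effective number of independent exponential directions. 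Siegel's Lemma (the pigeonhole principle applied to a system of linear equations with integer coefficients) then produces a non-trivial choice of the $p(\cdot)$ for which $\Phi$ and all its derivatives up to a large order vanish at every integer $z=1,2,\dots,L$, while keeping the heights of the coefficients under control.

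Next comes the extrapolation. By the maximum-modulus principle applied to $\Phi$ on large discs, together with only moderate upper bounds for its derivatives (legitimate because $\Phi$ is an exponential polynomial of controlled degree), the high-order vanishing on the initial range forces $\Phi$ to be extraordinarily small; since its derivatives at integer points are algebraic numbers of bounded height, a fundamental inequality (Liouville-type lower bound) then forces them to vanish outright on a substantially larger range and to higher order. Iterating this analytic--arithmetic see-saw a bounded number of times yields vanishing at so many points that an interpolation (Vandermonde) determinant would have to vanish, which is incompatible with the $p(\cdot)$ being non-trivial; equivalently, one exhibits a non-zero algebraic number whose absolute value is simultaneously bounded below by the fundamental inequality and above by the analytic estimate, the desired contradiction.

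The main obstacle, and the true content of Baker's argument, is this extrapolation/iteration step: one must calibrate the parameters $L$, the vanishing order, the degree of $\Phi$, and the number of iterations so that the analytic smallness gained at each stage strictly outpaces both the growth in the heights of the coefficients and the loss incurred in enlarging the range of points, and so that the final determinant or fundamental-inequality contradiction is actually reached. Carrying out these quantitative estimates — controlling denominators, conjugates, and the $\mathbb{Q}$-linear independence of the logarithms throughout the descent — is the delicate heart of the method, whereas the remaining ingredients (the set-up by contradiction, Siegel's Lemma, and the absorption of $\alpha_n^{z}$ using the relation) are comparatively routine.
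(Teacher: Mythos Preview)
The paper does not supply its own proof of this statement: Theorem~\ref{T6} is simply quoted as Baker's theorem with a reference to \cite{AB}, and is used as a black box in the sequel. Your proposal correctly recognises this and offers a faithful high-level outline of Baker's original method (auxiliary function built via Siegel's Lemma, analytic extrapolation by the maximum principle, arithmetic lower bounds forcing further vanishing, and a terminal Vandermonde/Liouville contradiction), which is exactly the content of \cite{AB}; so there is nothing to compare, and your sketch is appropriate for a result the paper imports rather than proves.
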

	
	In 2010, Murty and Saradha \cite{MS} (also see \cite{MM}) demonstrated a significant implication of this outcome, presented as follows:
	
\begin{lemma} \label{L1}
	Let $\alpha_1, \ldots, \alpha_n$ be positive algebraic numbers. If $c_0, c_1, \ldots, c_n$ are algebraic numbers with $c_0 \neq 0$, then $$c_0 \pi + \sum_{j=1}^n c_j \log \alpha_j$$ is a transcendental number and hence non-zero.
\end{lemma}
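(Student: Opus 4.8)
The plan is to reduce the statement to a single application of Baker's theorem (Theorem~\ref{T6}) by reinterpreting $\pi$ as the logarithm of an algebraic number. Writing the principal logarithm $\log(-1)=i\pi$, I would substitute $\pi=-i\log(-1)$ and recast the target quantity as
$$E:=c_0\pi+\sum_{j=1}^n c_j\log\alpha_j=(-ic_0)\log(-1)+\sum_{j=1}^n c_j\log\alpha_j,$$
which is now a linear combination, with algebraic coefficients, of the logarithms of the nonzero algebraic numbers $-1,\alpha_1,\dots,\alpha_n$, and whose coefficient $-ic_0$ on $\log(-1)$ is nonzero precisely because $c_0\neq 0$. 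If I could feed these logarithms directly into Theorem~\ref{T6}, the conclusion would follow immediately; the obstruction is that Baker's theorem demands the logarithms be linearly independent over $\mathbb{Q}$, whereas $\log\alpha_1,\dots,\log\alpha_n$ need not be.

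To circumvent this I would pass to a maximal $\mathbb{Q}$-linearly independent subset $\lambda_1,\dots,\lambda_{m-1}$ of $\{\log\alpha_1,\dots,\log\alpha_n\}$ and adjoin $\lambda_m:=\log(-1)=i\pi$. The crux of the argument, and what I expect to be the main obstacle, is verifying that the enlarged set $\{\lambda_1,\dots,\lambda_{m-1},i\pi\}$ remains linearly independent over $\mathbb{Q}$. Here the hypothesis that each $\alpha_j$ is a \emph{positive} algebraic number is decisive: every $\log\alpha_j$ is then a real number, so in any rational relation $a_0(i\pi)+\sum_k a_k\lambda_k=0$ a comparison of imaginary parts forces $a_0\pi=0$, whence $a_0=0$, and the choice of the $\lambda_k$ then forces $a_1=\cdots=a_{m-1}=0$. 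Thus $\lambda_1,\dots,\lambda_m$ are $\mathbb{Q}$-linearly independent logarithms of algebraic numbers, which is exactly the hypothesis Theorem~\ref{T6} requires.

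Expressing each $\log\alpha_j$ as a rational combination of $\lambda_1,\dots,\lambda_{m-1}$, I can rewrite $E=\sum_{k=1}^{m}e_k\lambda_k$ with algebraic coefficients $e_k$, where crucially the top coefficient is $e_m=-ic_0\neq 0$. Now Theorem~\ref{T6} guarantees that $1,\lambda_1,\dots,\lambda_m$ are linearly independent over the field of algebraic numbers. Were $E$ algebraic, the identity $E\cdot 1-\sum_{k=1}^{m}e_k\lambda_k=0$ would exhibit a nontrivial algebraic linear dependence among $1,\lambda_1,\dots,\lambda_m$, nontrivial exactly because $e_m=-ic_0\neq 0$, contradicting that independence. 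Hence $E$ must be transcendental, and in particular $E\neq 0$, which is the assertion of the lemma. The only genuinely delicate point is the $\mathbb{Q}$-independence of the adjoined $i\pi$, handled by the real/imaginary-part observation above; everything else is bookkeeping in a finite-dimensional $\mathbb{Q}$-vector space followed by a direct invocation of Baker's theorem.
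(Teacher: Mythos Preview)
Your argument is correct and is the standard deduction of this lemma from Baker's theorem: rewrite $\pi$ as $-i\log(-1)$, extract a maximal $\mathbb{Q}$-independent subset of the real logarithms $\log\alpha_j$, observe that adjoining $i\pi$ preserves $\mathbb{Q}$-independence because the $\log\alpha_j$ are real, and then invoke Theorem~\ref{T6}. The one step worth scrutinizing---that $i\pi$ is $\mathbb{Q}$-independent from the chosen real logarithms---you handle cleanly via the real/imaginary-part comparison, and this is precisely where the positivity hypothesis on the $\alpha_j$ enters.

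As for comparison: the paper does not supply its own proof of Lemma~\ref{L1}. It quotes the lemma as a known consequence of Baker's theorem, attributing it to Murty and Saradha \cite{MS} (see also \cite{MM}), and uses it as a black box. Your write-up is essentially the argument found in those references, so there is no methodological divergence to discuss---you have filled in what the paper leaves as a citation.
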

	
	Moreover, in the year 2014, Chatterjee and Murty \cite{CM} documented an alternative version of this lemma, articulated as follows:
	
\begin{lemma} \label{L2}
	Let $\alpha_1, \ldots, \alpha_n$ be positive units in a number field of degree $> 1$. Let $r$ be a positive rational number unequal to $1$. If $c_0, c_1, \ldots, c_n$ are algebraic numbers with $c_0 \neq 0$ and $d$ is an integer, then $$c_0 \pi + \sum_{j=1}^n c_j \log \alpha_j + d \log r$$ is a transcendental number and hence non-zero.
\end{lemma}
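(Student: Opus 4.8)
The plan is to deduce the statement from Baker's theorem (Theorem \ref{T6}), in essentially the same way Lemma \ref{L1} is obtained, after first rewriting $\pi$ as a logarithm of an algebraic number. I would set $\Theta = c_0\pi + \sum_{j=1}^n c_j\log\alpha_j + d\log r$ and use the principal-branch identity $\log(-1) = i\pi$ to replace $c_0\pi$ by $-ic_0\log(-1)$, noting that $-ic_0$ is a \emph{nonzero} algebraic number since $c_0 \neq 0$. This turns $\Theta$ into an algebraic linear combination of the logarithms $\log(-1), \log\alpha_1, \ldots, \log\alpha_n, \log r$ of algebraic numbers, so that the problem becomes a question about linear forms in logarithms to which Theorem \ref{T6} can be applied.

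The next step is to pass to a $\Q$-linearly independent set of logarithms so that Baker's theorem applies cleanly. The key structural observation is that $\alpha_1, \ldots, \alpha_n$ and $r$ are positive reals, so their logarithms are real, whereas $\log(-1) = i\pi$ is purely imaginary; hence $\log(-1)$ cannot lie in the $\Q$-span (indeed the $\R$-span) of the real logarithms. I would then choose a maximal $\Q$-linearly independent subset $\{\log\beta_1, \ldots, \log\beta_m\}$ of $\{\log\alpha_1, \ldots, \log\alpha_n, \log r\}$, with each $\beta_k$ a positive algebraic number, and express every $\log\alpha_j$ and $\log r$ as a $\Q$-linear combination of the $\log\beta_k$. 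Substituting these back yields $\Theta = -ic_0\log(-1) + \sum_{k=1}^m e_k\log\beta_k$ with algebraic coefficients $e_k$, and by the observation above the enlarged set $\{\log(-1), \log\beta_1, \ldots, \log\beta_m\}$ is $\Q$-linearly independent.

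Finally, I would argue by contradiction. Suppose $\Theta$ equals an algebraic number $\theta$. Then $-ic_0\log(-1) + \sum_{k=1}^m e_k\log\beta_k - \theta = 0$ is a $\overline{\Q}$-linear relation among $1, \log(-1), \log\beta_1, \ldots, \log\beta_m$, and it is nontrivial precisely because the coefficient $-ic_0$ of $\log(-1)$ is nonzero. Since $\log(-1), \log\beta_1, \ldots, \log\beta_m$ are linearly independent over $\Q$, Theorem \ref{T6} guarantees that $1, \log(-1), \log\beta_1, \ldots, \log\beta_m$ are linearly independent over the field of algebraic numbers, contradicting the relation. Hence $\Theta$ is transcendental, and in particular nonzero.

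I expect the only delicate point to be the reduction in the second step: one must ensure that after rewriting everything over a $\Q$-independent basis of logarithms the coefficient of $\log(-1)$ survives, which is exactly where the purely imaginary nature of $\log(-1)$ together with $c_0\neq 0$ is used. It is worth noting that the hypotheses that the $\alpha_j$ be units in a field of degree $>1$ and that $r\neq 1$ are not actually needed for this transcendence conclusion — the argument goes through for arbitrary positive algebraic $\alpha_j$ and positive algebraic $r$ — so the lemma may equally be viewed as the special case of Lemma \ref{L1} obtained by absorbing $d\log r$ as one further term $c_{n+1}\log\alpha_{n+1}$, with those extra hypotheses merely recording the form in which the lemma will be applied.
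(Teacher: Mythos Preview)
Your argument is correct: rewriting $c_0\pi$ as $-ic_0\log(-1)$, reducing the real logarithms to a $\Q$-independent set, observing that the purely imaginary $\log(-1)$ cannot lie in the $\Q$-span of real numbers, and then invoking Theorem~\ref{T6} is exactly the standard route, and your handling of the ``delicate point'' is sound. Your closing remark is also on target: for the bare transcendence conclusion the unit and $r\neq 1$ hypotheses are superfluous, and the statement is formally a special case of Lemma~\ref{L1}.

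There is, however, nothing to compare against: the paper does not supply a proof of Lemma~\ref{L2}. It merely records the statement and attributes it to Chatterjee and Murty \cite{CM}. So your write-up fills a gap the paper leaves to the literature, and the method you use is the same one underlying Lemma~\ref{L1} (and, ultimately, the proof in \cite{CM}).
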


	Furthermore, Chatterjee and Garg \cite{TSG3} validated a revised interpretation of Lemma \ref{L1}. The formulation of the outcome is presented as follows:
\begin{lemma} \label{L3}
	Let $\alpha_1, \ldots, \alpha_n$ be positive algebraic numbers such that $\log \alpha_1, \ldots, \log \alpha_n$ are linearly independent over $\mathbb{Q}$. Then, the set of numbers
	$$ \{ 1, \pi, \log \alpha_1, \ldots, \log \alpha_n\} $$ is linearly independent over $\overline{\mathbb{Q}}$. In particular, $\frac{\pi}{\log \alpha_k}$ is transcendental for all $\alpha_k$.
\end{lemma}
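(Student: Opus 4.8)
The plan is to assume a nontrivial $\overline{\mathbb{Q}}$-linear dependence and derive a contradiction, organizing the argument by whether or not the coefficient of $\pi$ vanishes. Concretely, suppose that
$$ a + b\pi + \sum_{j=1}^n c_j \log \alpha_j = 0 $$
for algebraic numbers $a, b, c_1, \ldots, c_n \in \overline{\mathbb{Q}}$, and aim to show that every coefficient must be zero. Both imported ingredients are already tailored to exactly this hypothesis, so the whole proof reduces to invoking them on the two pieces of the relation.

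First I would eliminate the $\pi$ term. If $b \neq 0$, I rewrite the relation as $b\pi + \sum_{j=1}^n c_j \log \alpha_j = -a$. The left-hand side is precisely of the shape handled by Lemma \ref{L1}, with nonzero leading coefficient $c_0 = b$, so it is transcendental; but the right-hand side $-a$ is algebraic, which is impossible. Hence $b = 0$.

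With $b = 0$ the relation collapses to $a + \sum_{j=1}^n c_j \log \alpha_j = 0$. Since $\log\alpha_1, \ldots, \log\alpha_n$ are linearly independent over $\mathbb{Q}$ by hypothesis, Baker's theorem (Theorem \ref{T6}) asserts that $1, \log\alpha_1, \ldots, \log\alpha_n$ are linearly independent over the field of algebraic numbers, forcing $a = c_1 = \cdots = c_n = 0$. Together with $b = 0$ this yields the claimed linear independence of $\{1, \pi, \log\alpha_1, \ldots, \log\alpha_n\}$ over $\overline{\mathbb{Q}}$. For the final assertion, I note that $\mathbb{Q}$-linear independence of the $\log\alpha_j$ forces each $\log\alpha_k \neq 0$; then if $\pi/\log\alpha_k$ were algebraic, say equal to $\beta$, the identity $\pi - \beta\log\alpha_k = 0$ would be a nontrivial $\overline{\mathbb{Q}}$-relation among the listed numbers with coefficient $1$ on $\pi$, contradicting the independence just proved, so $\pi/\log\alpha_k$ is transcendental.

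I do not expect a genuine obstacle here, since the statement is essentially a bookkeeping synthesis of two results supplied in the excerpt. The only point requiring care is recognizing that the single hypothesis — the $\mathbb{Q}$-linear independence of the $\log\alpha_j$ — is exactly what licenses both Lemma \ref{L1} (to kill the $\pi$ coefficient) and Theorem \ref{T6} (to kill the remaining coefficients); no additional transcendence machinery is needed, and the case split on $b$ is what cleanly separates the two appeals.
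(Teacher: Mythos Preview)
Your argument is correct. The case split on the coefficient of $\pi$ is the natural route: Lemma~\ref{L1} disposes of the case $b\neq 0$, and Baker's theorem (Theorem~\ref{T6}) handles the residual relation once $b=0$; the corollary about $\pi/\log\alpha_k$ then follows immediately. One small inaccuracy in your closing commentary: Lemma~\ref{L1} does \emph{not} require the $\mathbb{Q}$-linear independence of the $\log\alpha_j$ --- it applies to arbitrary positive algebraic $\alpha_j$ as long as $c_0\neq 0$ --- so that hypothesis is used only in the appeal to Theorem~\ref{T6}. This does not affect the validity of the proof.

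As for comparison with the paper: the paper does not supply its own proof of Lemma~\ref{L3}; it merely quotes the result from \cite{TSG3}. There is therefore nothing to compare against, but your derivation from Lemma~\ref{L1} and Theorem~\ref{T6} is exactly the kind of short synthesis one would expect, and is presumably close to what the cited reference contains.
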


	\textbf{Note:} We shall consider $q>1$ throughout this paper.
	
	\section{\bf Proofs of the main theorems}
	
\begin{proof}[\textbf{Proof of Theorem \ref{T3}}]
	For $1 \leq a < b/2$ with $(a,b)=1$, consider $c_a$ $\in \mathbb{F}$, such that:
\begin{align*}
	\sum_{\substack{1 \leq a < b/2 \\ (a , b) = 1}}c_a \left (\gamma_0\left(q,\frac{a}{b}\right) - \gamma_0\left(q,1-\frac{a}{b}\right)\right)  =0.
\end{align*}
	By substituting the value of $\gamma_0\left(q,\frac{a}{b}\right) - \gamma_0\left(q,1-\frac{a}{b}\right)$ from Theorem \ref{T2}, we derive:
\begin{align*}
	&\left(\frac{q-1}{\log q}\right)\pi \left( c_{a_1}\cot \frac{\pi a_1}{b} + \cdots + c_{a_{\varphi(b)/2}}\cot \frac{\pi a_{\varphi(b)/2}}{b}\right)\\
	&+ (2q-3) \left(c_{a_1} \left(\frac{1}{2} - \frac{a_1}{b}\right) + \cdots + c_{a_{\varphi(b)/2}} \left(\frac{1}{2} - \frac{a_{\varphi(b)/2}}{b}\right)\right) = 0,
\end{align*}
	which implies:
\begin{align}
	&\left(\frac{q-1}{\log q}\right)\pi \left( c_{a_1}\cot \frac{\pi a_1}{b} + \cdots + c_{a_{\varphi(b)/2}}\cot \frac{\pi a_{\varphi(b)/2}}{b}\right) \nonumber\\
	&= (3 - 2q) \left(c_{a_1} \left(\frac{1}{2} - \frac{a_1}{b}\right) + \cdots + c_{a_{\varphi(b)/2}} \left(\frac{1}{2} - \frac{a_{\varphi(b)/2}}{b}\right)\right). \label{E1}
\end{align}
	As stated in Lemma \ref{L3}, $\frac{\pi}{\log q}$ is a transcendental number. Consequently, the left-hand side of Equation (\ref{E1}) is an algebraic multiple of a transcendental number, hence it is either zero or transcendental number. On the other hand, the right-hand side of Equation (\ref{E1}) corresponds to an algebraic number. Thus, we can infer:
	$$  c_{a_1}\cot \frac{\pi a_1}{b} + \cdots + c_{a_{\varphi(b)/2}}\cot \frac{\pi a_{\varphi(b)/2}}{b}=0$$
	and 
	$$c_{a_1} \left(\frac{1}{2} - \frac{a_1}{b}\right) + \cdots + c_{a_{\varphi(b)/2}} \left(\frac{1}{2} - \frac{a_{\varphi(b)/2}}{b}\right)=0.$$
	Now, employing Theorem \ref{T5} for the case $k=1$, we ascertain that the following set of numbers:
	$$\left\{\cot \frac{\pi a}{b}: 1 \leq a < b/2, (a,b)=1\right\}$$
	is linearly independent over $\mathbb{F}$. Thus, $c_a = 0$, for all $1 \leq a < b/2$ with $(a,b)=1$. Therefore, the proof of the first part is complete.\\
	In particular, consider $c_a$'s $\in \mathbb{F}$, not all zero, then we have 
\begin{align*}
	\sum_{\substack{1 \leq a < b/2 \\ (a , b) = 1}}c_a \left (\gamma_0\left(q,\frac{a}{b}\right) - \gamma_0\left(q,1-\frac{a}{b}\right)\right) \neq 0.
\end{align*}
	Thus,
\begin{align*}
	\sum_{a}c_a\left(\gamma_0 \left(q, \frac{a}{b} \right) - \gamma_0 \left(q,1 - \frac{a}{b} \right) \right) & =\left( \frac{q-1}{\log q} \right) \pi \sum_{a}c_a \cot \left(\frac{\pi a}{b} \right)\\
	&\quad + (2q-3)  \left(\frac{1}{2}\sum_{a}c_a - \frac{1}{b}\sum_{a}c_a a \right)\\
	& \neq 0
\end{align*}
	Again, from Theorem \ref{T5} and Lemma \ref{L3}, we get the proof of the second part.
\end{proof}
	
\begin{proof}[\textbf{Proof of Corollary \ref{C1}}]
	The corollary is an immediate outcome of both Theorem \ref{T2} and Theorem \ref{T3}.
\end{proof}
	
\begin{proof}[\textbf{Proof of Theorem \ref{T4}}]
	Initially, it is worth noting that the space $V_{\mathbb{F}}(q,b)$ can be spanned by the following sets of numbers:
\begin{align*}
	&\left \{1,	\gamma_0 \left(q, \frac{a}{b} \right) - \gamma_0 \left(q,1- \frac{a}{b} \right): 1 \leq a < \frac{b}{2}, (a,b)=1 \right \},\\
	\text{and}~&\left \{	\gamma_0 \left(q, \frac{a}{b} \right) + \gamma_0 \left(q,1- \frac{a}{b} \right): 1 \leq a < \frac{b}{2}, (a,b)=1\right \}.
\end{align*}
	Now, from Theorem \ref{T2}, we know:
\begin{align*}
	\gamma_0 \left(q, \frac{a}{b} \right) - \gamma_0 \left(q,1- \frac{a}{b} \right) = \left( \frac{q-1}{\log q} \right) \pi \cot \left(  \frac{\pi a}{b} \right) + (2q-3) \left(\frac{1}{2} - \frac{a}{b} \right).
\end{align*}
	and Theorem \ref{T3} establishes the linear independence of the set
	$$\left \{	\gamma_0 \left(q, \frac{a}{b} \right) - \gamma_0 \left(q,1- \frac{a}{b} \right):  1 \leq a < \frac{b}{2}, (a,b)=1\right \}$$
	over $\mathbb{F}$. Finally, using Corollary \ref{C1}, we conclude that
	$$dim_{\mathbb{F}}V_{\mathbb{F}}(q,b) \geq \frac{\varphi(b)}{2} +1.$$
\end{proof}

	Now, prior to delving into the proof of Theorem \ref{T7}, we present the following lemma, which assumes a crucial role in its proof:
\begin{lemma}\label{L4}
	Let $b \geq 3$ be any positive integer and $1 \leq a < b/2$ with $(a,b) = 1$. Then, we have
	$$\cot \left(\frac{\pi a}{b}\right) = \sum_{d=1}^b (\zeta_b^{ad} - \zeta_b^{-ad})B_1\left(\frac{b}{d}\right),$$
	where $B_1(x)$ is the $1$-st Bernoulli polynomial.
\end{lemma}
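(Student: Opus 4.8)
The plan is to read the right-hand side as a finite discrete Fourier transform of the first Bernoulli polynomial sampled at the points $d/b$, and to evaluate it in closed form. Writing $\zeta_b = e^{2\pi i/b}$, note first that $\zeta_b^{ad}-\zeta_b^{-ad} = 2i\sin(2\pi ad/b)$, so the sum is a real multiple of $i$; keeping track of this imaginary unit (and the overall sign) is the one genuinely delicate point, so I would fix the normalization only at the very end.

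First I would reduce the sum to a single exponential sum. Since $B_1(d/b) = d/b - 1/2$ and, because $(a,b)=1$ forces $\zeta_b^{a}\neq 1$, we have $\sum_{d=1}^{b}\zeta_b^{\pm ad}=0$, the constant $-\tfrac12$ contributes nothing; it then remains only to evaluate $\tfrac1b\sum_{d=1}^{b} d\,\zeta_b^{\pm ad}$.

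Next comes the main computation. With $\omega = \zeta_b^{\pm a}$, a primitive $b$-th root of unity, I would apply the standard closed form for $\sum_{d} d\,x^{d}$ and use $\omega^{b}=1$, $\omega\neq 1$ to collapse the sum to $b\,\omega/(\omega-1)$; hence $\sum_{d=1}^{b} B_1(d/b)\,\omega^{d} = \omega/(\omega-1)$. Writing $\omega = e^{i\theta}$ with $\theta = 2\pi a/b$ and multiplying numerator and denominator by $e^{-i\theta/2}$ gives $\omega/(\omega-1) = \tfrac12 - \tfrac{i}{2}\cot(\theta/2)$, that is, $\tfrac12 \mp \tfrac{i}{2}\cot(\pi a/b)$ for $\omega=\zeta_b^{\pm a}$, where the half-angle identity $\sin\theta/(1-\cos\theta)=\cot(\theta/2)$ is precisely what produces the cotangent.

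Finally I would subtract the two conjugate evaluations $\omega=\zeta_b^{a}$ and $\omega=\zeta_b^{-a}$: the $\tfrac12$-terms cancel while the cotangent terms reinforce, yielding $\sum_{d=1}^{b}(\zeta_b^{ad}-\zeta_b^{-ad})B_1(d/b) = -i\cot(\pi a/b)$, which is the asserted identity after multiplying through by $i$. The main obstacle is thus not conceptual but bookkeeping: choosing the index range so that the constant terms and the boundary contribution at $d=b$ (where the sawtooth $B_1$ jumps) cancel cleanly, and correctly pinning down the factor of $i$ together with the sign, since the left side is real while each summand on the right is purely imaginary. Everything else reduces to a routine geometric-sum manipulation.
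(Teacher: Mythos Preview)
Your computation is correct and arrives at exactly the same final identity the paper does, namely $\cot(\pi a/b)=i\sum_{d=1}^{b}(\zeta_b^{ad}-\zeta_b^{-ad})B_1(d/b)$ (so the lemma as stated is off by a factor of $i$, and $B_1(b/d)$ should read $B_1(d/b)$; you handled both of these correctly).

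The route, however, is genuinely different from the paper's. The paper does not compute anything by hand: it introduces the odd periodic function $\delta_a$ (equal to $1$ at $a$, $-1$ at $b-a$, $0$ elsewhere), and then quotes two formulas of Murty--Saradha expressing $L(k,\delta_a)$ in two ways---once via the finite Fourier transform $\widehat{\delta}_a$ against Bernoulli polynomials, and once via derivatives of $\pi\cot(\pi z)$ at $z=a/b$. Equating the two at $k=1$ yields the identity immediately. Your argument, by contrast, is entirely elementary and self-contained: you collapse the sum using $B_1(d/b)=d/b-\tfrac12$, evaluate $\sum_d d\,\omega^d$ in closed form, and apply a half-angle identity. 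What the paper's approach buys is context (the identity is visibly the $k=1$ case of a family involving higher Bernoulli polynomials and cotangent derivatives), while your approach buys independence from the cited literature and makes the sign and $i$-bookkeeping completely transparent---which, as you note, is the only delicate point.
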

\begin{proof}[\textbf{Proof}]
	For any periodic function $f$ with period $b$, we have (see \cite{MS1})
	$$2L(k, f)=\frac{(2 \pi i)^k}{k !} \sum_{r=1}^b \widehat{f}(r) B_k(r / b),$$
	where
	$$B_k(x)=\frac{-k !}{(2 \pi i)^k} \sum_{\substack{n=-\infty \\ n \neq 0}}^{\infty} \frac{e^{2 \pi i n x}}{n^k}$$
	is the $k$-th Bernoulli polynomial and
	$$\widehat{f}(n)=\frac{1}{b} \sum_{d=1}^b f(d) e^{2 \pi i d n / b}$$
	is the Fourier transform of $f$. For $a$ co-prime to $b$, we consider the following odd function:
	$$\delta_a(n)= \begin{cases}1, & \text { if } n=a \\ -1, & \text { if } n=b-a \\ 0, & \text { otherwise. }\end{cases}$$
	Then,
	$$2 L\left(k, \delta_a\right)=\frac{(2 \pi i)^k}{k !} \sum_{d=1}^b \widehat{\delta}_a(d) B_k(d / b),$$
	where $\widehat{\delta}_a(n)=\frac{1}{b} \displaystyle\sum_{d=1}^b \delta_a(d) e^{\frac{2 i \pi d n}{b}}=\frac{1}{b}\left[\zeta_b^{a n}-\zeta_b^{-a n}\right]$, where $\zeta_b=e^{\frac{2 \pi i}{b}}$. 
	Hence, we have:
\begin{align}\label{E7}
	2 L\left(k, \delta_a\right)= \frac{(2 \pi i)^k}{b k !} \sum_{d=1}^b \left(\zeta_b^{a d}-\zeta_b^{-a d}\right)B_k(d / b).
\end{align} 
	Also, $L(k,f)$ is related to cotangent function with the following relation \cite{MS1}:
\begin{align}\label{E8}
	L(k,\delta_a) = -\frac{(-1)^k}{(k-1)!b^k}\sum_{\substack{1 \leq a_k < b/2 \\ (a_k , b) = 1}}\delta_a(a_k) \left(\frac{d^{k-1}}{dz^{k-1}}(\pi \cot \pi z)|_{z=a/b}\right).
\end{align}
	Finally, using Theorem \ref{T8}, substitute $k=1$ in Equations (\ref{E7}) and (\ref{E8}). Thus, we obtain:
	$$\cot \frac{\pi a}{b} = i \sum_{d=1}^{b}\left(\zeta_b^{a d}-\zeta_q^{-a d}\right)B_1(d / b).$$
	This completes the proof.	
\end{proof}
	
\begin{proof}[\textbf{Proof of Theorem \ref{T7}}]
	First, it is important to note that the space $\widehat{V}_{\mathbb{F}}(q,b)$ can be spanned by the following sets of real numbers:
\begin{align*}
	&\left \{1,	\gamma_0 \left(q, \frac{a}{b} \right) - \gamma_0 \left(q,1- \frac{a}{b} \right) - \left( (2q-3) \left(\frac{1}{2} - \frac{a}{b} \right)\right): 1 \leq a < \frac{b}{2}, (a,b)=1 \right \},\\
	\text{and}~&\left \{	\gamma_0 \left(q, \frac{a}{b} \right) + \gamma_0 \left(q,1- \frac{a}{b} \right) + \left( (2q-3) \left(\frac{1}{2} - \frac{a}{b} \right)\right): 1 \leq a < \frac{b}{2}, (a,b)=1\right \}.
\end{align*}
	Using Lemma \ref{L4}, we have the following equality: 
	$$\frac{\log q}{i \pi}\left[\left( \gamma_0 \left(q, \frac{a}{b} \right) - \gamma_0 \left(q,1- \frac{a}{b} \right)\right) - \left( (2q-3) \left(\frac{1}{2} - \frac{a}{b} \right)\right)\right]= \sum_{d=1}^{b}\left(\zeta_b^{a d}-\zeta_q^{-a d}\right)B_1(d / b),$$
	where $B_1(x)$ denotes the $1$-st Bernoulli polynomial. Also, it is important to observe that for any positive integer $b \geq 3$ and $1 \leq a < b$ with $(a,b)=1$, we have:
	$$i \cot \left(\frac{\pi a}{b}\right) = \frac{1 + \zeta_b^a}{1 - \zeta_b^a},$$
	belongs to $\mathbb{Q}(\zeta_b)$. Consequently, $\cot(\pi a/b) \in \mathbb{Q}(\zeta_b)$.
	Assuming $\kappa_a \in \mathbb{F}$, we conclude that $$\kappa_a \in F = \mathbb{F} \cap \mathbb{Q}(\zeta_b).$$ Since $F$ is a Galois extension over $\mathbb{Q}$, so each element within the Galois group, $$G = Gal(\mathbb{Q}(\zeta_b)/\mathbb{Q}),$$ when restricted to $F$ gives an automorphism of $F$. Notably, for any $(r,b)=1$, the corresponding element $\sigma_r$ of $G$, induced by the action $\zeta_b \to \zeta_b^r$, maps $\kappa_a$ to $\kappa_{ar}$. Hence, $$\kappa_c \in F,~ \text{for all}~(c,b)=1~ \text{with}~ 1 \leq c < b/2.$$
	Therefore, we deduce that
\begin{align*}
	\left( \gamma_0 \left(q, \frac{a}{b} \right) - \gamma_0 \left(q,1- \frac{a}{b} \right)\right) - \left( (2q-3) \left(\frac{1}{2} - \frac{a}{b} \right)\right) \in i \frac{\pi}{\log q}\mathbb{F},
\end{align*}
	for all $1 \leq a < b/2$ with $(a,b)=1$. Clearly, $\varphi(b)/2$ real numbers 
	$$	\left( \gamma_0 \left(q, \frac{a}{b} \right) - \gamma_0 \left(q,1- \frac{a}{b} \right)\right) - \left( (2q-3) \left(\frac{1}{2} - \frac{a}{b} \right)\right)$$
	are linearly dependent over $\mathbb{F}$. So, these numbers contribute at most $1$ in the dimension. Thus, we have $$dim_{\mathbb{F}}V_{\mathbb{F}}(q,b) \leq \frac{\varphi(b)}{2} +2. $$
	Furthermore, using Lemma \ref{L3}, we get that $1$ and $\frac{\pi}{\log q}$ are linearly independent over $\mathbb{F}$. Thereby, we establish $$dim_{\mathbb{F}}V_{\mathbb{F}}(q,b) \geq 2. $$
	This completes the proof.
\end{proof}

\begin{proof}[\textbf{Proof of Corollary \ref{C3}}]
	As $\mathbb{F} \cap \mathbb{Q}(\zeta_b) = \mathbb{Q}(\zeta_b) \neq \mathbb{Q}$, so the proof of the corollary directly follows form Theorem \ref{T7}. 
\end{proof}
	\vspace{5mm}

\end{document}